\documentclass[11pt]{amsart}

\usepackage{amssymb}
\usepackage{latexsym}
\input xypic
\xyoption {all}

\usepackage{color}

\hyphenation{classes}
\lefthyphenmin=2  \righthyphenmin=3
\arraycolsep 2pt

\theoremstyle{plain}
\newtheorem{thm}{Theorem}[section]

\newtheorem{prop}[thm]{Proposition}

\newtheorem{remark}[thm]{Remark}

\newtheorem{lemma}[thm]{Lemma}

\newtheorem{prob}[thm]{Problem}

\newtheorem{defn}[thm]{Definition}

\newtheorem{ex}[thm]{Example}

\theoremstyle{definition}
\newtheorem*{ack}{Acknowledgements}

\numberwithin{equation}{section}





\newcommand{\supp}{\operatorname{supp}}

\newcommand{\sgn}{\operatorname{sgn}}


\newcommand{\lb}{\label}



\begin{document}

\dedicatory{Dedicated to the memory of Joram Lindenstrauss}

\title{Narrow and $\ell_2$-strictly singular operators from $L_p$}


\author{V.~Mykhaylyuk}

\address{Department of Mathematics\\
Chernivtsi National University\\ str. Kotsjubyns'koho 2,
Chernivtsi, 58012 Ukraine}

\email{vmykhaylyuk@ukr.net}

\author{M.~Popov}

\address{Department of Mathematics\\
Chernivtsi National University\\ str. Kotsjubyns'koho 2,
Chernivtsi, 58012 Ukraine}

\email{\texttt{misham.popov@gmail.com}}

\author{B.~Randrianantoanina}

\address{Department of Mathematics\\
Miami University\\
Oxford, OH, 45056, USA}

\email{\texttt{randrib@muohio.edu}}

\author{G.~Schechtman}\thanks{G.S. was supported by the Israel Science Foundation and by the U.S.-Israel Binational Science Foundation.}

\address{Department of Mathematics\\
Weizmann Institute of Science\\
Rehovot, Israel 76100 }

\email{\texttt{gideon@weizmann.ac.il}}

\subjclass[2010]{Primary 47B07; secondary  47B38, 46B03, 46E30}

\keywords{Narrow operator, $\ell_2$-strictly singular operator, $L_p(\mu)$-spaces}

\begin{abstract}
In the first part of the paper we prove that for $2 < p, r < \infty$ every operator $T: L_p \to \ell_r$ is narrow. This completes the list of   sequence and function Lebesgue spaces $X$ with the property that every operator $T:L_p \to X$ is narrow.

 Next, using similar methods we prove that every $\ell_2$-strictly singular operator from $L_p$, $1<p<\infty$, to any Banach space with an unconditional basis, is narrow, which partially answers a question of Plichko and Popov posed in 1990.

A theorem of H.~P.~Rosenthal asserts that if  an operator $T$ on $L_1[0,1]$ satisfies the
assumption that for each measurable set $A \subseteq [0,1]$ the restriction $T \bigl|_{L_1(A)}$ is not an isomorphic embedding, then $T$ is narrow.
 (Here  $L_1(A) = \{x \in L_1:  {\rm supp} \, x \subseteq A\}$.) Inspired by this result, in the last part of the paper, we find a sufficient condition, of a different flavor than being $\ell_2$-strictly singular, for operators on $L_p[0,1]$, $1<p<2$, to be narrow. We define a notion of a ``gentle'' growth of a function and we prove that  for $1 < p < 2$ every operator $T$ on $L_p$ which, for every $A\subseteq[0,1]$, sends a function of ``gentle" growth supported on $A$ to a function of arbitrarily small norm is narrow.
\end{abstract}

\maketitle

\section{Introduction}
\label{sec0}

In this paper
we study narrow operators on the real spaces $L_p$, for $1\le p <\infty$ (by $L_p$ we mean the $L_p[0,1]$ with the Lebesgue  measure $\mu$). Let $X$ be a Banach space. We say that a linear operator $T : L_p \to X$ is narrow if for every $\varepsilon>0$ and every measurable set $A\subseteq [0,1]$ there exists $x\in L_p$ with $x^2 = \mathbf{1}_A$, $\displaystyle{\int_{[0,1]} x \, d \mu = 0}$, so that $\|Tx\|<\varepsilon$. By $\Sigma$ we denote the $\sigma$-algebra of Lebesgue measurable subsets of $[0,1]$, and set $\Sigma^+ = \{A \in \Sigma: \, \mu(A) > 0\}$.

As is easy to see, every compact operator $T:L_p\to X$ is narrow. However  in general  the class of narrow operators is much larger than that of compact operators. All  spaces  $X$  that admit a non-compact operator from $L_p$ to $X$ also admit a narrow non-compact operator from $L_p$ to $X$ \cite[Corollary~4.19]{PR}.

In Section~\ref{seceverynarrow} we start to study the question for what Banach spaces $X$, every operator $T:L_p\to X$ is narrow and settle it for $X$ being a (function or sequence) Lebesgue space.
It is known that every operator $T:L_p\to L_r$ is narrow when $1 \leq p < 2$ and $p < r$ \cite{KP92}. This is not true for any other values of $p$ and $r$, as Example~\ref{R:Sch} below shows.

Note that, when $1 \leq r < 2$ and $p$ is arbitrary ($1 \leq p < \infty$) then by Khintchine's inequality and the fact that every operator from $\ell_2$ to $\ell_r$ is compact, every operator $T:L_p\to \ell_r$ is narrow, see \cite[p.~63]{PP}. This also holds when $1 \leq p < 2$ and $p < r$, as a consequence of the above mentioned result that every operator $T:L_p\to L_r$ is narrow.

In Section~\ref{seceverynarrow} we settle the remaining cases of $p$ and $r$ and deduce

\vspace{0,2 cm}

\noindent{\textbf{Theorem A.}} \textit{If $1\le p < \infty$ and $1\le r\not=2 < \infty$ or $1\le p<2$ and $r=2$ then every operator $T :L_p\to \ell_r$ is narrow.}

\vspace{0,2 cm}

For $2 < p \leq r < \infty$, the operator $S_{p,r}$ that sends the normalized Haar system in $L_p$ to the unit vector basis of $\ell_r$ is bounded. The proof that this operator is narrow is the main ingredient of the proof of Theorem~A.

Recall that for Banach spaces $X$, $Y$, and $Z$, an operator $T:X\to Y$ is said to be {\it $Z$-strictly singular}
if it is not an isomorphism when restricted to any isomorphic copy of $Z$ in $X$. In 1990 Plichko and Popov  \cite{PP} asked whether every $\ell_2$-strictly singular operator $T:L_p\to X$ is necessarily narrow. In Section~\ref{secstrsng}, generalizing Theorem A, we answer this question positively for Banach spaces $X$  with an unconditional basis.

\vspace{0,2 cm}

\noindent{\textbf{Theorem B.}}  \textit{For every $p$ with $1 < p < \infty$,   and every Banach space $X$ with an unconditional basis,  every  $\ell_2$-strictly singular operator $T :L_p\to X$ is narrow.}

The proof of Theorem B is quite a simple modification of that of Theorem A.
We preferred however to give the two proofs separately for the benefit of the reader who is interested only in the more concrete cases of Lebesgue spaces.


The motivation for the last part of the paper, Section {\ref{secGnarrow}, is the study of the relationship between narrow operators and Enflo operators on $L_p$,  that is operators so that there exists a subspace $X\subseteq L_p$ isomorphic to $L_p$ such that $T$ restricted to $X$ is an into isomorphism. Note that there are narrow operators $T$ on $L_p$ which are Enflo \cite[p.~55]{PP}.
Indeed the conditional expectation from $L_p([0,1]^2)$ onto the subspace of all functions depending on the first variable only is such an operator. A natural question that arises here and that has been studied by several authors is whether non-narrow operators on $L_p$  are always Enflo.

This is evidently true for $p=2$, but false for $p > 2$  due to the following example:

 \begin{ex} \lb{R:Sch}
Let $p>2$ and  $T = S \circ J$ where $J: L_p \to L_2$ is the inclusion embedding and $S: L_2 \to L_p$ is an isomorphic embedding. Then $T$ is not narrow and not Enflo. (Also, for all $p>r\ge 1$, the inclusion $J:L_p\to L_r$ is not narrow.)
 \end{ex}

For $p = 1$, the answer  is affirmative, which follows from the results of  Enflo and Starbird  \cite{ES}. The following remarkable result of Rosenthal (the equivalence of (c) and (d) in Theorem 1.5 of \cite{Ros84}) gives much more: necessary and sufficient conditions on an operator $T \in \mathcal L(L_1)$ to be narrow.

\begin{thm}[H.~Rosenthal] \label{Ros}
An operator $T:L_1\to L_1$ is narrow if and only if, for each measurable set $A \subseteq [0,1]$ the restriction $T \bigl|_{L_1(A)}$ is not an isomorphic embedding.
\end{thm}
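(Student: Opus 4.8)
The ``only if'' direction is elementary, so I dispose of it first. Suppose $T$ is narrow and fix $A\in\Sigma^+$; I claim $T\bigl|_{L_1(A)}$ is not bounded below. Given any $c>0$, apply the definition of narrowness to the set $A$ and to $\varepsilon=c\,\mu(A)$: this produces $x\in L_1$ with $x^2=\mathbf 1_A$, $\int_{[0,1]}x\,d\mu=0$ and $\|Tx\|_1<c\,\mu(A)$. Since $x^2=\mathbf 1_A$ we have $x\in L_1(A)$ and $\|x\|_1=\mu(A)>0$, whence $\|Tx\|_1<c\,\|x\|_1$; as $c>0$ was arbitrary, $T\bigl|_{L_1(A)}$ cannot be an isomorphic embedding.

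For the ``if'' direction --- the substance of the theorem --- I would argue by contraposition. Assume $T$ is not narrow: there are $A_0\in\Sigma^+$ and $\varepsilon_0>0$ with $\|Tx\|_1\ge\varepsilon_0$ for every $x$ such that $x^2=\mathbf 1_{A_0}$ and $\int_{[0,1]}x\,d\mu=0$. Passing to the restriction $T\bigl|_{L_1(A_0)}$ and identifying $L_1(A_0)$ with $L_1$, I may assume $A_0=[0,1]$, so that $T$ is bounded below, by $\varepsilon_0$, on the set of all \emph{balanced signs} $\mathbf 1_C-\mathbf 1_{[0,1]\setminus C}$ with $\mu(C)=\tfrac12$; the goal is to find $A_1\in\Sigma^+$ on which $T\bigl|_{L_1(A_1)}$ is an isomorphic embedding. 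So suppose instead that $T\bigl|_{L_1(A)}$ fails to be bounded below for \emph{every} $A\in\Sigma^+$, i.e. for each $A\in\Sigma^+$ and each $\delta>0$ there is a unit vector $f\in L_1(A)$ with $\|Tf\|_1<\delta$. The plan is to assemble, out of such ``near-kernel'' functions, a balanced sign of $[0,1]$ with image of norm $<\varepsilon_0$ --- a contradiction. The main tool would be the Kadec--Pe\l czy\'nski/Rosenthal subsequence-splitting dichotomy for $L_1$, applied to a sequence of near-kernel unit vectors supported on smaller and smaller sets: a subsequence is either uniformly integrable or, after subtracting a uniformly integrable part, disjointly supported and hence spans $\ell_1$. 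In the first (spread-out) case one would truncate and renormalize to convert near-kernel functions into honest signs of small image; in the second (peaked) case one would exploit the $\ell_1$-structure to pair off disjoint pieces of equal measure. Patching the resulting mean-zero signs together over an exhausting partition of $[0,1]$, with summably small images, would produce the sought balanced sign.

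The step I expect to be the real obstacle --- and the reason the theorem lies far deeper than the elementary fact that compact operators are narrow --- is precisely this ``rounding'' of near-kernel functions into balanced signs. Since $T$ need neither preserve disjointness nor be positive, smallness of $\|Tf\|_1$ for a near-kernel $f$ (or of $\|T\mathbf 1_E\|_1$ for $E$ of small measure) does \emph{not} by itself force $\|T(\mathbf 1_{E_1}-\mathbf 1_{E_2})\|_1$ to be small for disjoint $E_1,E_2$ of equal measure. Getting around this requires a close analysis of the $L_1$-valued vector measure $E\mapsto T\mathbf 1_E$ and its variation, in tandem with the structure theory of subspaces of $L_1$ --- in effect, promoting the ``balanced sign'' lower bound to a genuine sign-embedding on some $L_1(A')$ and then invoking Rosenthal's theorem that a sign-embedding of $L_1$ restricts to an isomorphic embedding on $L_1(A_1)$ for a suitable $A_1\in\Sigma^+$. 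The ``only if'' direction, by contrast, is just the three lines above.
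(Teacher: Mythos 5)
Your ``only if'' direction is correct and complete: narrowness gives, for each $A\in\Sigma^+$ and each $c>0$, a mean zero sign $x$ on $A$ with $\|Tx\|_1<c\,\mu(A)=c\|x\|_1$, so $T\bigl|_{L_1(A)}$ is not bounded below. But this is the trivial half. For the ``if'' direction you have not given a proof; you have given a plan and then, at the decisive point, you yourself identify the step you cannot carry out (converting near-kernel functions into mean zero signs with small image) and finally discharge it by ``invoking Rosenthal's theorem that a sign-embedding of $L_1$ restricts to an isomorphic embedding on $L_1(A_1)$''. That sign-embedding theorem, together with the passage from the hypothesis you start with to a sign embedding, is precisely the content of the equivalence being proved (it is the heart of Theorem 1.5 of \cite{Ros84}, from which this statement is quoted; the present paper does not reprove it either). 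So the argument is circular in the only place where real work is required.

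Two concrete gaps inside the sketch itself. First, non-narrowness only gives $\|Tx\|\ge\varepsilon_0$ for mean zero signs $x$ whose support is the \emph{whole} set $A_0$; a sign embedding requires $\|Tx\|\ge\delta\|x\|_1$ for signs supported on \emph{arbitrary} subsets $B\subseteq A'$, where $\|x\|_1=\mu(B)$ may be small. Promoting the former to the latter on some $A'\in\Sigma^+$ is a genuine theorem (part of Rosenthal's chain of equivalences, resting on the Enflo--Starbird analysis of the vector measure $E\mapsto T\mathbf 1_E$), not a routine localization, and nothing in your outline supplies it. Second, your proposed contradiction route --- assembling a balanced sign of small image out of unit vectors $f\in L_1(A)$ with $\|Tf\|_1$ small via the Kadec--Pe\l czy\'nski splitting --- founders exactly where you say it does: since $T$ is neither positive nor disjointness-preserving, smallness of $\|Tf\|_1$, or of $\|T\mathbf 1_E\|_1$ for small $E$, gives no control on $\|T(\mathbf 1_{E_1}-\mathbf 1_{E_2})\|_1$, and no truncation/renormalization trick stated here bridges that. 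As it stands, the hard implication remains unproved except by appeal to the result itself.
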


What happens for $1<p<2$? This is related to the following theorem of Johnson, Maurey, Schechtman and Tzafriri  \cite{JMST}:

\begin{thm} \label{comp}
Let $1<p<\infty$ and  $T:L_p\to L_p$ be an into isomorphism. Then there exists a subspace $Y\subseteq L_p$ isomorphic to $L_p$ and so that $TY$ is isomorphic to $L_p$ and complemented in $L_p$.
\end{thm}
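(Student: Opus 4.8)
The plan is to first perform a formal reduction, passing from $T$ to a statement purely about subspaces of $L_p$, and then to prove that statement by locating a well-placed isomorphic copy of $L_p$ inside an arbitrary one. For the reduction: since $T$ is an into isomorphism it carries $L_p$ isomorphically onto $X:=T(L_p)$, so $X\cong L_p$; hence it suffices to prove that \emph{every subspace of $L_p$ isomorphic to $L_p$ contains a subspace $V$ that is isomorphic to $L_p$ and complemented in $L_p$}. Indeed, applying this to $X$ yields such a $V\subseteq X$, and then $Y:=\{f\in L_p:\ Tf\in V\}$ is as required, since $T$ maps $Y$ isomorphically onto $V$ (so $Y\cong V\cong L_p$) while $TY=V$ is complemented in $L_p$.

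\emph{Finding a lattice model.} Given $X\subseteq L_p$ with an isomorphism $U:L_p\to X$, the aim is to build inside $X$ a system of simple functions having exactly the combinatorial pattern of the Haar system of $L_p$ and, at each dyadic level, disjointly supported up to an arbitrarily small error. The main device would be the subsequence splitting principle: a bounded sequence in $L_p$ has a subsequence of the form $g_k+h_k$ with $(g_k)$ disjointly supported and $(|h_k|^p)$ uniformly integrable. I would construct the Haar-type tree by induction on its levels: having built the first $n$ levels inside $X$, consider the $U$-images of a suitable $(n{+}1)$-st Haar level of $L_p$, and use the structure theory of subspaces of $L_p$ (the Kadec--Pe{\l}czy{\'n}ski dichotomy in localized form) to replace them, after passing to subsequences and small perturbations, by functions of $X$ that are almost disjointly supported and almost lattice-orthogonal to the earlier levels---the point being that $X\cong L_p$ prevents the construction from collapsing onto an $\ell_2$- or $\ell_p$-pattern. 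With summably small errors at level $n$, the closed span $Y_0$ of the resulting system is close to $Z:=\varphi\cdot L_p(\mathcal D,\nu)$, where $\varphi\ge 0$ is a weight supported on a measurable set $G$, $\mathcal D$ is a dyadic filtration of $G$, and $\nu$ is $\mu|_G$ reweighted by $\varphi^p$; as $\nu$ is nonatomic, $Z\cong L_p$, and hence $Y_0\cong L_p$.

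\emph{Complementation.} The sublattice $Z$ is $1$-complemented in $L_p$: the operator $Pf:=\varphi\cdot E_\nu\!\bigl[\varphi^{-1}f\mathbf{1}_G\mid\mathcal D\bigr]$ is a norm-one projection of $L_p$ onto $Z$, being the $\nu$-conditional expectation onto $L_p(\mathcal D,\nu)$ transported through the change-of-density isometry $g\mapsto\varphi\, g$ (one checks directly that $\|Pf\|_p\le\|f\|_p$ and $Pf=f$ for $f\in Z$). Since complementation of a subspace is stable under small perturbations, $Y_0$ is also complemented in $L_p$, and one takes $V:=Y_0$. This would complete the proof of the reduced statement, hence of the theorem.

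\emph{Main obstacle.} The reduction and the complementation step are routine; the real difficulty is the inductive extraction. One must ensure that the $U$-images of a carefully chosen Haar tree in $L_p$ can be pushed, level by level and by bounded perturbations, onto an almost disjoint family, while keeping the errors accumulated over infinitely many levels small enough that the limiting span stays isomorphic to $L_p$ (and not merely to $\ell_p$ or $\ell_2$) and the $1$-complementability of the unperturbed sublattice is not lost. Supplying this control is precisely what the structure theory for subspaces of $L_p$ of Johnson, Maurey, Schechtman and Tzafriri is designed for.
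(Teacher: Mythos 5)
There is a genuine gap, and it is precisely where you locate it yourself. Note first that the paper does not prove this statement at all: it is quoted as a known theorem of Johnson, Maurey, Schechtman and Tzafriri from \cite{JMST}, so there is no internal proof to match your argument against. Your formal reduction (pass to $X=T(L_p)$, find $V\subseteq X$ with $V\cong L_p$ complemented in $L_p$, set $Y=T^{-1}(V)$) is correct, and your complementation step for the lattice model is fine: the weighted conditional expectation $Pf=\varphi\,E_\nu[\varphi^{-1}f\mathbf{1}_G\mid\mathcal D]$ with $d\nu=\varphi^p\,d\mu$ is indeed a norm-one projection onto $\varphi\cdot L_p(\mathcal D,\nu)$, and small-perturbation stability of complemented subspaces is standard. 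But these are the routine parts. The entire mathematical content of the theorem is the step you label the ``main obstacle'': extracting, inside an \emph{arbitrary} isomorphic copy of $L_p$, a Haar-patterned tree of almost disjointly supported functions whose closed span is isomorphic to $L_p$, with errors summable enough to preserve both the isomorphism and the complementation. You do not carry this out; you explicitly defer it to ``the structure theory for subspaces of $L_p$ of Johnson, Maurey, Schechtman and Tzafriri,'' i.e.\ to the very source of the theorem being proved. As written, the argument is circular rather than incomplete in a repairable way.

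Moreover, the tools you name do not by themselves bridge the gap. The subsequence splitting principle and the Kadec--Pe{\l}czy{\'n}ski dichotomy are statements about \emph{sequences}, give $\ell_p$- or $\ell_2$-type behavior after passing to subsequences (and the dichotomy in its usual form is a $p>2$ phenomenon; for $1<p<2$ there is no analogous alternative), and neither forces the level-by-level almost-disjointness of a \emph{tree} inside $X$, nor does it rule out that the $U$-images of Haar levels sit essentially in a ``Gaussian'' ($\ell_2$-like) or small-support part of $L_p$. The assertion that ``$X\cong L_p$ prevents the construction from collapsing onto an $\ell_2$- or $\ell_p$-pattern'' is exactly the quantitative fact that needs proof, and it is where the real work lies: in \cite{JMST} this is done with machinery (random measures/types and stopping-time constructions) far beyond the named lemmas, and the later route of \cite{DJS} gets the conclusion (with uniform constants, for $1<p<2$) by quantifying non-narrowness. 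So the proposal should be regarded as a correct reduction plus a plan, not a proof; to make it a proof you would have to supply the extraction argument independently of the JMST theorem itself.
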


In fact Theorem~\ref{comp} is valid for a much wider class of spaces than $L_p$. However in the special case of $L_p$ for $1<p<2$ the proof gives more: It is enough to assume that $T$ is non-narrow to get the same conclusion (this is mentioned without proof in \cite[Theorem 4.12, item 2, p.~54]{Bou81-2}). Recently Dosev, Johnson and Schechtman \cite{DJS} proved the following strengthening of Theorem~\ref{comp} and the above mentioned variant of it.

\begin{thm} \label{DJS}
For each $1 < p < 2$ there is a constant $K_p$ such that if $T \in \mathcal L(L_p)$ is a non-narrow operator (and in particular, if it is an isomorphism) then there is a $K_p$-complemented subspace $X$ of $L_p$ which is $K_p$-isomorphic to $L_p$ and such that $T\big|_X$ is a  $K_p-$isomorphism and $T(X)$ is $K_p$ complemented in $L_p$.
\end{thm}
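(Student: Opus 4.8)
The plan is to realize $X$ as the closed span of a well-chosen ``Haar-like'' tree and to arrange that its image under $T$ is again Haar-like. Index by $\mathcal D$ the finite $0$--$1$ words; we shall build sets $(E_I)_{I\in\mathcal D}$ with $E_{I0},E_{I1}\subseteq E_I$ disjoint and of equal measure, and the mean-zero functions $h_I=\mathbf 1_{E_{I0}}-\mathbf 1_{E_{I1}}$, and set $X=\Span\{h_I:I\in\mathcal D\}^{-}$. The construction will guarantee, with all constants depending only on $p$: (i) $(h_I)$ is equivalent to the Haar basis of $L_p$ and spans a complemented subspace; (ii) $\|Th_I\|\asymp\|h_I\|=\mu(E_{I0}\cup E_{I1})^{1/p}$; and (iii) $(Th_I)$ is again equivalent to the Haar basis and $T(X)=\Span\{Th_I\}^{-}$ is complemented in $L_p$. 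Granting this, (i) makes $X$ a $K_p$-complemented, $K_p$-isomorphic copy of $L_p$; for $x=\sum_I a_Ih_I$ one has $Tx=\sum_I a_ITh_I$, and unconditionality of the Haar basis together with (i)--(iii) shows that $\|x\|$ and $\|Tx\|$ each agree, up to a factor $K_p$, with the $L_p$ square-function norm $\bigl\|\bigl(\sum_I|a_I|^2\mathbf 1_{E_{I0}\cup E_{I1}}\bigr)^{1/2}\bigr\|_p$, so $T|_X$ is a $K_p$-isomorphism; and $T(X)$ is $K_p$-complemented by (iii).

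The construction begins with two reductions. A \emph{localization}: since the narrow operators from $L_p$ form a hereditary class stable under the relevant countable unions in the domain, there is a largest-in-measure set on which $T$ is narrow; discarding it and renormalizing the measure, we may assume $T|_{L_p(B)}$ is non-narrow for every $B\in\Sigma^+$. A \emph{stabilization}, which is the step that forces $K_p$ to depend on $p$ alone and which uses $1<p<2$ in an essential way (it fails for $p>2$, as Example~\ref{R:Sch} ultimately reflects): writing $\varphi(B)=\mu(B)^{-1/p}\sup\{\|Th\|:h\text{ mean-zero},\,h^2=\mathbf 1_B\}$ and $\Phi(B)=\sup\{\varphi(B'):B'\subseteq B,\ \mu(B')>0\}$, one produces a set $A_0\in\Sigma^+$ and a scalar $\gamma>0$ with $\gamma\le\Phi(B)\le 2\gamma$ for all $B\subseteq A_0$. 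After replacing $T$ by $\gamma^{-1}T$ this means: \emph{every} mean-zero $h$ with $h^2=\mathbf 1_B$, $B\subseteq A_0$, satisfies $\|Th\|\le 2\mu(B)^{1/p}$; and each such $B$ contains a subset $B'$ carrying a mean-zero $h$ with $h^2=\mathbf 1_{B'}$ and $\tfrac14\mu(B')^{1/p}\le\|Th\|\le 2\mu(B')^{1/p}$. The gain is that now the \emph{chosen} vectors, not merely $\|T\|$, have $T$-image comparable to their own norm with \emph{absolute} constants.

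Using the stabilized estimate repeatedly one builds the tree by induction on $|I|$, splitting each $E_I$ — via a patient exhaustion of $E_I$, always passing to very small auxiliary subsets so that measures decay as fast as one pleases down the branches — into the supports of functions $h$ with $\|Th\|\asymp\|h\|$; arranging the exhaustion so as to keep enough branching, one obtains from the Gamlen--Gaudet theorem that $\Span\{h\}^{-}$ is a $C_p$-complemented copy of $L_p$, $C_p$-equivalent to the Haar basis — this is (i), and (ii) is built in. For (iii) one prunes further: since $\|Th\|$ can be forced to $0$ as fast as desired along the tree, a Bessaga--Pe\l czy\'nski/gliding-hump argument in the range, organized by a stopping time adapted to the Haar filtration of $L_p$, produces a sub-tree on which $Th=g_h+r_h$ with $(g_h)$ a genuine Haar-like martingale-difference system in $L_p$ and $\sum_h\|r_h\|$ arbitrarily small; then $(g_h)$ spans, by Gamlen--Gaudet, a $C_p$-complemented copy of $L_p$ that is $C_p$-equivalent to the Haar basis, and the small perturbation transfers both properties to $(Th)$. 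A diagonalization over the countably many stabilization and pruning steps keeps all constants functions of $p$ only.

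The main obstacle is precisely item (iii) \emph{with a universal constant}. Being merely $C_p$-equivalent to the Haar basis does not make a subspace of $L_p$ ($1<p<2$) complemented, so the pruning must deliver the vectors $Th$ in genuine ``Haar-like position'' — a small perturbation of a martingale-difference tree over an atomic $\sigma$-algebra — and it must do so with perturbation errors and equivalence constants that do not see $\|T\|$ or the initial distortion of $T$. Handling this requires the finer structure of $L_p$ for $1<p<2$ (its cotype $2$, the Rosenthal inequality for sums of independent functions, the uniform boundedness of the Gamlen--Gaudet projections), and it is what goes beyond Theorem~\ref{comp} and its non-narrow variant: those already furnish \emph{some} complemented isomorphic copy of $L_p$ inside $T(L_p)$ — one could even invoke them to reduce at the outset to the case that $T$ is an isomorphism — but the constants they supply depend on $T$, whereas the whole content of the statement above is their uniformity.
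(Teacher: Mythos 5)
First, a point of comparison: the paper does not prove Theorem~\ref{DJS} at all; it is quoted from Dosev, Johnson and Schechtman \cite{DJS} (building on the machinery of \cite{JMST}), so your attempt has to be judged on its own, as a reconstruction of that external proof. Your outer scaffolding is sensible: the localization (gluing narrow restrictions over countably many disjoint sets uses type $p$, exactly as in Theorem~\ref{A}), the equal-measure Haar-like tree $(h_I)$, and the complementation of $X=\overline{\Span}\{h_I\}$ via conditional expectation onto the tree $\sigma$-algebra plus unconditionality (Gamlen--Gaudet) are all fine and give (i)--(ii) in your list with constants depending only on $p$, modulo the stabilization step, which you assert rather than prove: since $\Phi$ is monotone under inclusion, a bare exhaustion does not yield the two-sided control $\gamma\le\Phi(B)\le 2\gamma$ on \emph{all} $B\subseteq A_0$, and this self-improvement (from ``$T$ non-narrow'' to ``chosen signs have images comparable to their norms with absolute constants'') already needs a genuine argument.

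The real gap, however, is your item (iii), which you yourself flag as ``the main obstacle'' and then do not close. A Bessaga--Pe\l czy\'nski/gliding-hump pass in the range only makes the Haar expansions of the vectors $Th_I$ essentially disjointly supported; in $L_p$ with $1<p<2$ such block trees need not be equivalent to the Haar system, need not span complemented subspaces, and can perfectly well behave like $\ell_2$, $\ell_p$, or mixtures thereof. Nothing in your construction rules out, say, that every chosen sign is mapped into (a perturbation of) the span of a fixed sequence of independent symmetric random variables, in which case no subtree has images in ``Haar-like position'' and no perturbation argument applies; excluding such degenerate configurations requires using non-narrowness of $T$ on \emph{all} subsets together with the finer structure of $L_p$ for $p<2$ (Rosenthal-type inequalities, Kadec--Pe\l czy\'nski-type dichotomies, the reproducibility/tree arguments of \cite{JMST}), and this is precisely the content of \cite{DJS} that your sketch replaces with a list of tools. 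Until the pruning step actually produces a martingale-difference tree $(g_h)$ over an atomic filtration with $\sum_h\|Th_I-g_h\|$ summable against the lower bound, neither the complementation of $T(X)$ nor the uniform constant $K_p$ is established, so the proposal is an outline of the known strategy rather than a proof.
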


Here we are mostly interested in the following corollary of Theorem~\ref{DJS}.

\begin{thm} \label{oldA}
Assume $1 < p < 2$. Then every non-Enflo operator on $L_p$ is narrow.
\end{thm}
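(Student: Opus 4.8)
The plan is to derive Theorem~\ref{oldA} directly from Theorem~\ref{DJS} by contraposition, so the argument reduces essentially to matching up definitions. First I would recall that, by the definition given in Section~\ref{secGnarrow}, an operator $T$ on $L_p$ is Enflo exactly when there is a subspace $X \subseteq L_p$ isomorphic to $L_p$ such that $T\big|_X$ is an into isomorphism. Hence ``every non-Enflo operator on $L_p$ is narrow'' is the contrapositive of ``every non-narrow operator on $L_p$ is Enflo,'' and it suffices to establish the latter.

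So let $1 < p < 2$ and let $T \in \mathcal{L}(L_p)$ be non-narrow. Apply Theorem~\ref{DJS} with this $T$: there is a constant $K_p$ (depending only on $p$) and a subspace $X \subseteq L_p$ which is $K_p$-isomorphic to $L_p$ and such that $T\big|_X$ is a $K_p$-isomorphism onto its image (the additional conclusions about complementation of $X$ and of $T(X)$ are not needed here). In particular $X$ is isomorphic to $L_p$ and $T\big|_X$ is an into isomorphism, which is precisely the definition of $T$ being Enflo. This proves the contrapositive, and therefore the theorem.

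It is worth noting that the full strength of Theorem~\ref{DJS} is not required: one could instead invoke Theorem~\ref{comp} together with the refinement, attributed to Bourgain in \cite[Theorem~4.12, item~2]{Bou81-2}, that for $1 < p < 2$ the hypothesis that $T$ be an into isomorphism can be relaxed to $T$ being non-narrow, which again produces a subspace $Y \subseteq L_p$ isomorphic to $L_p$ with $T\big|_Y$ an into isomorphism. In either case the mathematical content is entirely contained in the cited results, so there is no genuine obstacle in this proof beyond correctly unwinding the definition of an Enflo operator; no new estimates are needed.
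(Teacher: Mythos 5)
Your proof is correct and is exactly the paper's route: the paper states Theorem~\ref{oldA} as an immediate corollary of Theorem~\ref{DJS}, obtained precisely by the contrapositive you spell out (non-narrow $\Rightarrow$ the subspace $X$ from Theorem~\ref{DJS} witnesses that $T$ is Enflo). Your only slip is cosmetic: the definition of an Enflo operator is given in the Introduction, not in Section~\ref{secGnarrow}.
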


It is an interesting problem whether Theorem~\ref{oldA} can be strengthened  to an analogue of Theorem~\ref{Ros}  for $1 < p \leq 2$.

\begin{prob} \label{pr:1}
Suppose $1 < p \leq 2$, and an operator $T:L_p\to L_p$ is such that for every  measurable set $A \subseteq[0,1]$ the restriction $T \bigl|_{L_p(A)}$ is not an isomorphic embedding. Does it follow that $T$ is narrow?
\end{prob}

In general this problem remains open, but in Section~\ref{secGnarrow} we obtain a partial answer under a suitable restriction. Namely, we consider operators $T$ so that for every $\varepsilon>0$ and every  measurable set $A \subseteq[0,1]$ there exists a function $x\in L_p$ with $\supp x\subseteq A$ and certain prescribed estimates for the distribution of $x$, so that $\|Tx\|<\varepsilon\|x\|$. These prescribed estimates are much less restrictive than the requirement that $x^2$ is the characteristic function of $A$, but they are not as general as the condition in Problem~\ref{pr:1}. To make this precise we introduce the following definitions.

\begin{defn} \label{d:mtrunc}
For any $x \in L_0$ and $M > 0$ we define the $M$-truncation $x^M$ of $x$ by setting
$$
x^M (t) = \left\{
            \begin{array}{ll}
              x(t), & \hbox{if} \,\,\, \bigl| x(t) \bigr| \leq M, \\
              M \cdot {\rm sign} \, \bigl( x(t) \bigr), & \hbox{if} \,\,\, \bigl| x(t) \bigr| > M.
            \end{array}
          \right.
$$
\end{defn}

\begin{defn} \label{d:acept}
Let $1 < p \leq 2$. A decreasing function $\varphi: (0, +\infty) \to [0,1]$ is said to be $p$-gentle if
$$
\lim\limits_{M \to +\infty} M^{2-p} \bigl( \varphi(M) \bigr)^p = 0.
$$
\end{defn}

\begin{defn} \label{d:dnar}
Let $1 < p \leq 2$, and let $X$ be a Banach space. We say that an operator $T \in \mathcal L(L_p,X)$ is gentle-narrow if there exists a $p$-gentle function $\varphi: (0, +\infty) \to [0,1]$ such that for every $\varepsilon > 0$, every $M > 0$ and every $A \in \Sigma$ there exists $x \in L_p(A)$ such that the following conditions hold
\begin{enumerate}
  \item[(i)] $\|x\| = \mu(A)^{1/p}$;
  \item[(ii)] $\|x - x^M\| \leq \varphi (M) \, \mu(A)^{1/p}$;
  \item[(iii)] $\|Tx\| \leq \varepsilon$.
\end{enumerate}
\end{defn}

Observe that every narrow operator is gentle-narrow with
$$
\varphi(M) = \left\{
               \begin{array}{ll}
                 1 - M, & \, \hbox{if} \,\, 0 \leq M < 1, \\
                 0, & \hbox{if} \,\, M \geq 1.
               \end{array}
             \right.
$$

Indeed, for every sign $x$ on $A$ one has that $\|x - x^M\| = \varphi(M) \, \mu(A)^{1/p}$ for each $M \geq 0$ where $\varphi$ is the above defined function.

Another condition  yielding that an operator $T \in \mathcal L(L_p,X)$ is gentle narrow is the following one: for each $A \in \Sigma$ and each $\varepsilon > 0$ there exists a mean zero gaussian random variable $x \in L_p(A)$ with the distribution
$$
d_x \stackrel{\rm def}{=} \mu \bigl\{ x < a \bigr\} = \frac{\mu(A)}{\sqrt{2 \pi \sigma^2}} \int_{- \infty}^a e^{- \frac{t^2}{2 \sigma^2}} dt
$$
and such that $\|Tx\| < \varepsilon$. One can show that in this case $T$ is gentle-narrow with $\varphi(M) = C e^{- \frac{M^2}{2 \sigma^2}}$, where $C$ is a constant independent of $M$.

The  main theorem of Section~\ref{secGnarrow} gives an affirmative answer to a weak version of Problem~\ref{pr:1}. Namely, we prove the following result.

\vspace{0,2 cm}
\noindent{\textbf{Theorem C.} \textit{Let} $1 < p \leq 2$. \textit{Then every gentle-narrow operator} $T :L_p\to L_p$
\textit{is narrow.}

\vspace{0,2 cm}

We finish the introduction with some remarks on the novelty of the approach here. The main results are all of the form that, given an operator with domain $L_p$
which sends functions of a certain kind to vectors of small norm, there exist also signs; i.e., functions taking only the values $\pm 1$ and sometimes also $0$, which
are sent by $T$ to vectors of relatively small norm. Naturally, the proofs are concentrated on building special signs out of given functions. There are two methods
that we use here to construct such signs and they may be the main contribution of this paper. One, in the proofs of theorems A and B, is probabilistic in nature and uses the martingale
structure of the partial sums of the Haar system, stoping times and the central limit theorem (although the notion of martingales and stoping times is not specifically
mentioned in the actual proofs). A similar idea originated in \cite{KS92} and was also used in \cite{KKW}. The other, in the proof of Theorem C, has geometrical nature and uses fine estimates on $L_p$ norms of functions of the form $1+y$
with $y$ of mean zero and bounded in absolute value by 1 and other inequalities for the $L_p$ norm.

\section{Preliminary results}
\label{prelres}

By a \textit{sign} we mean an element $x \in L_\infty$ which takes values in the set $\{-1,0,1\}$, and a {\it sign on a set $A \in \Sigma$} is any sign $x$ with ${\rm supp} \, x = A$. We say that a sign $x$ is of mean zero provided that
$\displaystyle{\int_{[0,1]} x \, d \mu = 0}$ holds.

An operator $T \in \mathcal L(L_p,X)$ is said to be {\it narrow} if
for each $A\in \Sigma$ and each $\varepsilon > 0$ there is a mean zero sign $x$ on $A$ such that $\|Tx\| < \varepsilon$. Equivalently, we can remove the condition on the sign $x$ to be of mean zero in this definition \cite[p.~54]{PP}.
The first systematic study of narrow operators was done in \cite{PP} (1990), however some results on them were known earlier. For more information on narrow operators we refer the reader to a recent survey \cite{P}.

We also consider a weaker notion.

\begin{defn} \label{d:somewhatnar}
An operator $T \in \mathcal L (L_p,X)$ is called \textit{somewhat narrow} if for each $A \in \Sigma$ and each $\varepsilon > 0$ there exists a set $B \in \Sigma, \,\, B \subseteq A$ and a sign $x$ on $B$ such that $\|Tx\| < \varepsilon \|x\|$.
\end{defn}

Obviously, each narrow operator is somewhat narrow. The inclusion embedding $J: L_p \to L_r$ with $1 \leq r < p < \infty$ is an example of a somewhat narrow operator which is not narrow.
However, for operators in $\mathcal L(L_p)$  for $1\le p\le 2$ and more generally for operators in $\mathcal L(L_p,X)$, with $X$ of type $p$ and $1\le p\le 2$ these two notions are equivalent.

Recall that $X$ is said to be of type $p$ if there is a constant $K$ such that for any unconditionally convergent series $\sum\limits_{n=1}^\infty x_n$ in $X$
     $$
      \left(\mathbb{E}_\theta\Bigl\| \sum\limits_{n=1}^\infty \theta_n x_n \Bigr\|^p\right)^{1/p} \leq K\left( \sum\limits_{n=1}^\infty \| x_n \|^p \right)^{1/p} $$
      where the expectation is taken over all sequences of signs $\theta=\{\theta_n\}_{n=1}^\infty$, $\theta_n=\pm1$. The fact that $L_p$, $1\le p\le 2$ is of type $p$ (with constant $K=1$) goes back probably to Orlicz and is easy to prove (see \eqref{type}).

\begin{thm} \label{A}
Let $1 \leq p \leq 2$ and let $X$ be of type $p$. Then every somewhat narrow operator $T \in \mathcal L(L_p,X)$ is narrow.
In particular any somewhat narrow operator $T \in \mathcal L(L_p)$ is narrow.
\end{thm}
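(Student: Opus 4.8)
The plan is to exhaust the measurable set $A$ by countably many disjoint pieces on each of which the somewhat narrow operator produces a small-norm sign, and then to glue these signs together into a single mean-zero sign on $A$, using the type-$p$ inequality to control the norm of the glued-together image. First I would fix $A \in \Sigma^+$ and $\varepsilon > 0$. Using a standard exhaustion argument: by somewhat narrowness, choose $B_1 \subseteq A$ with $\mu(B_1)$ as large as possible (up to a factor of, say, $1/2$ of the supremum of admissible measures) carrying a sign $x_1$ on $B_1$ with $\|Tx_1\| < \varepsilon' \|x_1\|$ for a small $\varepsilon'$ to be chosen; then repeat inside $A \setminus B_1$, and so on. A maximality/greediness argument shows $\mu\bigl(A \setminus \bigcup_n B_n\bigr) = 0$, so $\{B_n\}$ is (up to a null set) a partition of $A$ into sets each carrying a sign $x_n$ with $\|x_n\| = \mu(B_n)^{1/p}$ and $\|Tx_n\| < \varepsilon' \mu(B_n)^{1/p}$.

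Next I would assemble the candidate sign. The function $x = \sum_n x_n$ is a sign on $A$ (the supports are disjoint), but it need not be mean zero; this is repaired by the remark quoted from \cite{PP} that the mean-zero condition can be dropped from the definition of narrow, or alternatively by flipping signs on the pieces or passing to a suitable pair of pieces to cancel the mean. The key estimate is the bound on $\|Tx\|$. Since $L_p$ has an unconditional (Haar) decomposition and the $x_n$ have disjoint supports, for \emph{almost every} choice of signs $\theta_n = \pm 1$ the function $\sum_n \theta_n x_n$ is again a sign on $A$ with the same distribution, and
$$
\mathbb{E}_\theta \Bigl\| T \Bigl( \sum_n \theta_n x_n \Bigr) \Bigr\|^p = \mathbb{E}_\theta \Bigl\| \sum_n \theta_n Tx_n \Bigr\|^p \leq K^p \sum_n \|Tx_n\|^p < K^p (\varepsilon')^p \sum_n \mu(B_n) = K^p (\varepsilon')^p \mu(A),
$$
using that $X$ is of type $p$ applied to the (unconditionally convergent) series $\sum_n Tx_n$. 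Hence there is at least one choice of signs $\theta$ for which $\bigl\| T(\sum_n \theta_n x_n) \bigr\| \leq K \varepsilon' \mu(A)^{1/p}$, and choosing $\varepsilon' = \varepsilon / (K \mu(A)^{1/p})$ (or just $\varepsilon' = \varepsilon$ and absorbing the constant, since $\varepsilon$ was arbitrary) yields a mean-zero sign — after the mean-zero adjustment — on $A$ with image of norm $< \varepsilon$. This proves $T$ is narrow. The final clause is immediate since $L_p$ for $1 \leq p \leq 2$ is of type $p$ with constant $1$, as noted before the statement.

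The main obstacle I anticipate is not the type-$p$ averaging step, which is essentially forced, but rather verifying that the exhaustion actually covers $A$ up to a null set: one must argue that if a positive-measure remainder $A'$ were left over, somewhat narrowness applied to $A'$ would furnish a further sign on a positive-measure subset, contradicting the greedy maximality of the choices. This requires setting up the induction so that the measures $\mu(B_n)$ are chosen to be at least half (or any fixed fraction) of the current supremum of admissible subset-measures, so that these suprema tend to $0$ and no positive-measure set can survive. A secondary point of care is the mean-zero normalization: one should either invoke the cited equivalence that drops the mean-zero requirement, or observe that among the disjoint pieces one can always adjust signs (or split one piece into two halves of equal measure and opposite sign) to force $\int x \, d\mu = 0$ without affecting the norm estimate on $Tx$ by more than a controllable amount. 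Neither of these is deep, but both need to be stated cleanly.
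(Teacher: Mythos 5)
Your proposal is correct, but it follows a genuinely different route from the paper. The paper argues by contradiction: assuming every sign $x$ on $A$ satisfies $\|Tx\| > \varepsilon \mu(A)^{1/p}$, it runs a transfinite recursion of length $\omega_1$, at each stage $\beta$ applying the type-$p$ sign-choice to glue the previously chosen disjointly supported signs $x_\alpha$, $\alpha<\beta$, into one sign on the union $B$ with $\|Tx\|\le\varepsilon\mu(B)^{1/p}$; the contradiction hypothesis then forces $\mu(A\setminus B)>0$, so somewhat narrowness yields a new piece, and one ends up with uncountably many disjoint sets of positive measure, which is impossible. You instead argue directly: a greedy countable exhaustion (each $\mu(B_n)$ at least a fixed fraction of the current supremum, so the suprema tend to $0$ and no positive-measure remainder can survive) covers $A$ up to a null set, and then a single application of the type-$p$ inequality to the unconditionally convergent series $\sum_n Tx_n$ produces the desired sign; your reduction of the mean-zero requirement to the cited equivalence from \cite{PP} is exactly what the paper also does implicitly. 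Your version avoids ordinals and is constructive rather than by contradiction, at the cost of the maximality bookkeeping in the exhaustion; the paper's version needs no greedy choice because the contradiction hypothesis itself guarantees at every stage that the remainder has positive measure, but it pays with the transfinite argument. Two small polish points: the phrase ``for almost every choice of signs'' is unneeded, since \emph{every} choice $\theta_n=\pm1$ makes $\sum_n\theta_n x_n$ a sign on $A$ (up to a null set); and the alternative mean-zero fix by ``flipping signs on the pieces'' should be dropped, since the $\theta_n$ are already dictated by the type-$p$ step --- rely on the quoted equivalence instead.
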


Theorem~\ref{A} is not true for $p > 2$ as Example~\ref{R:Sch} shows.

\begin{proof}[Proof of Theorem \ref{A}] Let $K$ be the constant from the definition of the type $p$ property of $X$.
Fix any $A \in \Sigma^+$ and $\varepsilon > 0$. To prove that $T$ is narrow it is enough to prove that $\|Tx\| \leq \varepsilon \mu(A)^{1/p}$ for some sign $x$ on $A$.

Assume, for contradiction, that for each sign $x$ on $A$ one has that
$$\|Tx\| > \varepsilon \mu(A)^{1/p}.$$
We will construct a transfinite sequence $(A_\alpha)_{\alpha < \omega_1}$ of uncountable length $\omega_1$ of disjoint sets $A_\alpha \in \Sigma^+, \,\, A_\alpha \subset A$, which will give us the desired contradiction.

By the definition of a somewhat narrow operator, there exist a set $A_0 \in \Sigma^+$, $A_0 \subseteq A$ and a sign $x_0$ on $A_0$ such that
$$
\|Tx_0\| \leq \varepsilon K^{-1} \mu(A_0)^{1/p}.
$$

Observe that by our assumption, $x_0$ cannot be a sign on $A$, therefore, $\mu(A \setminus A_0) > 0$.

Suppose that for a given ordinal $0 < \beta < \omega_1$ we have constructed a transfinite sequence of disjoint sets $(A_\alpha)_{\alpha < \beta}\subseteq \Sigma^+, \,\, A_\alpha \subset A$ and a transfinite sequence $(x_\alpha)_{\alpha < \beta}$ of signs $x_\alpha$ on $A_\alpha$ such that
$$
\|Tx_\alpha\| \leq \varepsilon K^{-1} \mu(A_\alpha)^{1/p}.
$$

We set $B = \bigcup_{\alpha < \beta} A_\alpha$. Our goal is to prove that $\mu(A \setminus B) > 0$. Since $(x_\alpha)_{\alpha < \beta}$ is a disjoint sequence in $L_p(\mu)$ with $|x_\alpha| \leq 1$ a.e., one has that the series $\sum_{\alpha < \beta} x_\alpha$ unconditionally convergent, and so is the series $\sum_{\alpha < \beta} T x_\alpha$. By the definition of type $p$ there are sign numbers $\theta_\alpha = \pm 1, \,\, \alpha < \beta$ so that
\begin{equation} \label{eq:ineq}
\begin{split}
\Bigl\| \sum_{\alpha < \beta} \theta_\alpha T x_\alpha \Bigr\| &\leq K\Bigl(  \sum_{\alpha < \beta} \bigl\| T x_\alpha \bigr\|^p \Bigr)^{1/p} \leq \Bigl(  \sum_{\alpha < \beta} \varepsilon^p \mu \bigl( A_\alpha \bigr) \Bigr)^{1/p} = \varepsilon \mu(B)^{1/p}.
\end{split}
\end{equation}

Observe that $x = \sum_{\alpha < \beta} \theta_\alpha x_\alpha$ is a sign on $B$ and, by \eqref{eq:ineq},  $\|Tx\| \leq \varepsilon \mu(B)^{1/p}$. By our assumption, $x$ cannot be a sign on $A$ and hence $\mu(A \setminus B) > 0$. Using the definition of  a somewhat narrow operator, there exists  $A_\beta \in \Sigma^+, \,\, A_\beta \subseteq A$ and a sign $x_\beta$ on $A_\beta$ such that
$$
\|Tx_\beta\| \leq \varepsilon K^{-1}\mu(A_\beta)^{1/p}.
$$

Thus, the recursive construction is done.
\end{proof}

\section{A proof that every operator $T \in \mathcal L(L_p,\ell_r)$ is narrow for $2 < p, r < \infty$}
\label{seceverynarrow}

Throughout this section we use the following notation.

\begin{itemize}
  \item $\bigl(\overline{h}_n\bigr)_{n=1}^\infty$ -- the $L_\infty$-normalized Haar system;
  \vspace{0,2 cm}
  \item $(h_n)$ and $(h_n^*)$ -- the $L_p$- and $L_q$-normalized Haar functions respectively,
  where $1/p + 1/q = 1$.
  \end{itemize}

\begin{prop}\label{le1:p,r>1}
Suppose $1 \leq p< \infty$, $X$ is a Banach space with a basis $(x_n)$, $T \in \mathcal L(L_p,X)$ satisfies $\|Tx\|\geq 2 \delta$ for each mean zero sign $x \in L_p$
on $[0,1]$ and some $\delta>0$. Then for each $\varepsilon>0$ there exist an operator $S \in \mathcal L(L_p, X)$, a normalized block basis $(u_n)$ of $(x_n)$ and real numbers $(a_n)$ such that
\begin{enumerate}
  \item $S h_n = a_n u_n$ for each $n \in \mathbb N$ with $a_1 = 0$;
  \item $\|Sx\|\geq \delta$, for each mean zero sign $x \in L_p$ on $[0,1]$;
\item there exists a linear isometry $V$ of $L_p$ into $L_p$, which sends signs to signs, so that $\|Sx\|\leq \|TVx\| + \varepsilon$ for every $x \in L_p$ with $\|x\|=1$;
    \item there are finite codimensional subspaces $X_n$ of $L_p$ such that $\|Sx\|\leq \|TVx\| + 1/n$ for every $x \in X_n$ with $\|x\|=1$;
\end{enumerate}
If, moreover, $\|Tx\|\geq 2 \delta \|x\|$ for every sign $x$, then
$|a_n|\geq\delta$  for each $n \geq 2$.
\end{prop}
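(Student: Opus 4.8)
The plan is to build $S$ from $T$ in three stages: first "diagonalize" the Haar image by passing to a block basis of $(x_n)$; then check the diagonalized operator still separates mean-zero signs; then realize the diagonalization via an isometric substitution $V$ of $L_p$ so that the norm distortion is controlled. Throughout, write $Th_n = \sum_k c_{n,k} x_k$ in the basis $(x_n)$ of $X$. The key observation is the standard "gliding hump": since $\sum_k |c_{n,k}|$ need not be small but the tails $\sum_{k > N} c_{n,k} x_k$ of each fixed vector $Th_n$ go to $0$ in norm, and since on the finite-dimensional span of $h_1,\dots,h_m$ the operator $T$ is continuous, one can, by an inductive choice of a rapidly increasing sequence of integers and of blocks, arrange that $Th_n$ is, up to an arbitrarily small perturbation $\eta_n$ (with $\sum_n \eta_n < \varepsilon$), equal to $a_n u_n$ where $(u_n)$ is a normalized block basis of $(x_n)$ with pairwise disjoint supports in the basis $(x_n)$, and $a_n = \|P_n Th_n\|$ for the appropriate coordinate projection $P_n$. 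Define $S$ by $Sh_n = a_n u_n$ and extend linearly; since $(u_n)$ is a block basis and the Haar system is a basis of $L_p$, boundedness of $S$ follows from that of $T$ together with control of the perturbations. Normalizing $\|u_n\|=1$ forces $a_n = \|P_nTh_n\|$; we discard $h_1$ (the constant) by setting $a_1 = 0$, which costs nothing since we only care about mean-zero functions. This proves (1).

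For (2): a mean-zero sign $x$ on $[0,1]$ is supported on $\{h_n : n \ge 2\}$ and we compare $\|Sx\|$ with $\|Tx\|$. Writing $x = \sum_{n\ge 2} \langle x, h_n^*\rangle h_n$, we have $Sx = \sum_{n\ge 2} \langle x, h_n^*\rangle a_n u_n$ and $Tx = \sum_{n\ge 2}\langle x, h_n^*\rangle Th_n$, and the difference $\|Sx - Tx\|$ is bounded by $\sum_n |\langle x, h_n^*\rangle|\,\eta_n$; using $|\langle x, h_n^*\rangle| \le \|x\|_p \cdot \|h_n^*\|_q \le C$ (or better, bounding via the basis constant), and having arranged $\sum_n \eta_n$ small enough relative to the basis constants that appear — this is where one must be slightly careful and let the perturbations depend on the finite initial segments as in the gliding hump — we get $\|Sx - Tx\| \le \delta$, hence $\|Sx\| \ge \|Tx\| - \delta \ge 2\delta - \delta = \delta$.

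For (3) and (4): the isometry $V$ is the substitution operator induced by a measure-preserving rearrangement of $[0,1]$ that takes the (normalized) Haar system to itself after a sign-block reshuffling, composed with the identification that turns the block decomposition of $X$ back into coordinates; concretely $V$ sends $h_n$ to $h_{\pi(n)}$ (up to sign) for a suitable injection $\pi$ matching the blocks, so $\|Sx\| = \|TVx\| + (\text{error})$, the error coming again only from the $\eta_n$'s and being $\le \varepsilon$ on the unit sphere, and $\le 1/n$ once $x$ lies in the span of $h_k$ with $k$ large (since then only small-tail perturbations enter) — these spans are the finite-codimensional $X_n$. That $V$ sends signs to signs is automatic for a measure-preserving point transformation. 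Finally, the last assertion: if $\|Tx\| \ge 2\delta\|x\|$ for every sign $x$, apply this to $x = \overline h_n$ (an $L_\infty$-normalized Haar function, which is a sign on its dyadic support); then $\|T\overline h_n\| \ge 2\delta \|\overline h_n\|_p$, and since $\overline h_n$ is a fixed multiple of $h_n$ and $S h_n = a_n u_n$ with $\|u_n\| = 1$, unwinding the normalizations and absorbing the $\eta_n$ perturbation gives $|a_n| \ge \delta$ for $n \ge 2$.

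The main obstacle is the bookkeeping in the gliding-hump construction of step (1): one must choose the perturbations $\eta_n$ not all at once but inductively, small enough to beat the $L_q$-norms $\|h_n^*\|_q$ and the basis constant of $(x_n)$ that enter the later estimates (2)–(4), while simultaneously ensuring $(u_n)$ is genuinely a normalized block basis and $S$ is bounded — i.e. making the four conclusions compatible with a single sequence of choices. Once the order of quantifiers is set up correctly, each individual estimate is routine.
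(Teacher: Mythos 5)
Your construction of $S$ in step (1) has a genuine gap, and it is exactly the point where the hypothesis on $T$ (via narrowness of finite--rank operators) must be used. A gliding hump on the \emph{original} Haar system controls only the tails: for each fixed vector $Th_n$ you can choose $s_n$ so large that $\|Th_n-P_{s_n}Th_n\|$ is tiny, but nothing makes the \emph{head} $\|P_{s_{n-1}}Th_n\|$ small, and without that the images cannot be perturbed into disjointly supported blocks $a_nu_n$. Continuity of $T$ on the span of $h_1,\dots,h_m$ is irrelevant to where in the basis $(x_n)$ the vectors $Th_n$ live. For $1<p<\infty$ weak nullity of $(h_n)$ gives coordinate decay only along a subsequence, and passing to a subsequence destroys both conclusion (1) (which demands $Sh_n=a_nu_n$ for \emph{every} $n$) and, more seriously, the dyadic tree structure you need in (2), where a mean zero sign must be expanded in the full system; for $p=1$ you do not even have weak nullity. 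Your proposed $V$ (a permutation/sign reshuffle of the Haar functions) cannot repair this, because it does not change the given vectors $Th_n$, which may all have large components in the first few coordinates of $(x_n)$.

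The paper's proof resolves precisely this obstruction by \emph{not} working with the original Haar functions. It builds inductively a new tree of sets $A_{m,k}$ with $\mu(A_{m,k})=2^{-m}$ and new Haar--type functions $h_n'$: at each step the finite--rank operator $P_{s_n}T$ is narrow, so one can choose a mean zero sign $x_{m,k}$ on the already constructed set $A_{m,k}$ with $\|P_{s_n}Tx_{m,k}\|$ as small as desired; the next function $h'$ is a multiple of this sign, so its image has a small head, and its tail is then cut by choosing $s_{n+1}$ large. One sets $\widetilde S h_n'=(P_{s_n}-P_{s_{n-1}})Th_n'$, takes $V$ to be the isometry $h_n\mapsto h_n'$ (well defined and sign--preserving because of the tree properties, not because of any measure--preserving point map you would have to exhibit), and puts $S=\widetilde S\circ V$. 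Your estimates for (2)--(4) and for $|a_n|\ge\delta$ are in the right spirit (they mirror the paper's use of $\|h_n'^*\|_q\le 1$, the monotonicity of the Haar basis, and the bound applied to the sign--built $h_n'$ rather than to $\overline h_n$), but they all rest on the block--diagonalization you have not actually achieved; the adaptive construction of new signs to kill the heads is the missing idea.
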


\begin{proof}
Let $(P_n)_{n=1}^\infty$ be the basis projections in $X$ with respect to the basis $(x_n)$ and $P_0 = 0$. First we construct an operator $\widetilde S$ which has all the desired properties of $S$, with the small difference that $\widetilde S$ is defined on the closed linear span $H$ of a sequence which is isometrically equivalent to the Haar system and the required properties hold with $L_p$ replaced with $H$. For this purpose, we construct a sequence of integers $0 = s_1 < s_2 < \ldots$, a tree ${(A_{m,k})_{m=0}^\infty \,}_{k=1}^{2^m}$ of measurable sets $A_{m,k}\subseteq [0,1]$ and an operator $\widetilde{S} \in \mathcal L \bigl( L_p(\Sigma_1), X \bigr)$, where $\Sigma_1$ is the sub-$\sigma$-algebra of $\Sigma$ generated by the $A_{m,k}$'s with the following properties:

\begin{enumerate}
  \item[(P1)] $A_{0,1} = [0,1]$;
  \item[(P2)] $A_{m,k} = A_{m+1, 2k-1} \sqcup A_{m+1, 2k}, \,\,\,
\mu\bigl( A_{m,k} \bigr) = 2^{-m}$ for all $m,k$;
 \item[(P3)] $\|\widetilde{S} x\| \geq \delta$ for each mean zero sign $x
\in L_p(\Sigma_1)$ on $[0,1]$;
  \item[(P4)] if $h_1'=\textbf{1}$ and $h_{2^m+k}' = 2^{m/p}
\bigl(\textbf{1}_{A_{m+1, 2k-1}} - \textbf{1}_{A_{m+1, 2k}}\bigr)$ for all $m,k$, then we have $\widetilde{S} h_1'=0$ and $\widetilde{S} h_n' = (P_{s_n} - P_{s_{n-1}}) T h_n'$ for $n = 2, 3, \ldots$;
\item[(P5)] for all $n$
$$\Bigl\|Th'_n - \bigl( P_{s_n} - P_{s_{n-1}} \bigr) Th'_n \Bigr\|\leq \frac{\varepsilon}{2^n}.
$$
\end{enumerate}

We will use the following convention: once the sets $A_{m+1, 2k-1}$ and $A_{m+1, 2k}$ are defined for given $m,k$, we consider $h_{2^m+k}'$ to be defined by the equality from (P4).
To construct a family with the above properties, we set $A_{1,1}=[0,1/2)$ and $A_{1,2}=[1/2,1]$. Then choose $s_2 > 1$ so that
$$
\bigl\|Th'_2 - \bigl(P_{s_2} - P_{s_1} \bigr) T h'_2 \bigr\| = \bigl\|Th'_2 - P_{s_2} T h'_2 \bigr\| \leq \frac{\varepsilon}{4}. \leqno (C_2)
$$

Since the operator $P_{s_2} T$ is finite rank and hence narrow, there exists a mean zero sign $x_{1,1}$ on the set $A_{1,1}$ such that $\displaystyle{\bigl\| P_{s_2} T x_{1,1} \bigr\|\leq \frac{\varepsilon}{16\cdot 2^{1/p}}}$. Then set $A_{2,1} = x_{1,1}^{-1}(1) = \bigl\{t\in
[0,1]: \,\, x_{1,1}(t)=1 \bigr\}$, $A_{2,2} = x_{1,1}^{-1}(1)$ and observe that $h'_3 = 2^{1/p} x_{1,1}$ and $\bigl\|P_{s_2} T h'_3\bigr\| \leq \varepsilon/16$. Now we choose $s_3 > s_2$ so that $\bigl\| T h_3' - P_{s_3} T h_3'\bigr\| \leq \varepsilon/16$. Then we have
$$
\bigl\|Th'_3 - \bigl(P_{s_3} - P_{s_2}\bigr) T h'_3 \bigr\|\leq
\frac{\varepsilon}{8}. \leqno (C_3)
$$

Since $P_{s_3} T$ is narrow, there exists a mean zero sign $x_{1,2}$ on $A_{1,2}$ such that $\displaystyle{\bigl\|P_{s_3} T x_{1,2}\bigr\| \leq \frac{\varepsilon}{32\cdot 2^{1/p}}}$. Put $A_{2,3} = x_{1,2}^{-1}(1)$ and $A_{2,4} = x_{1,2}^{-1}(-1)$. Observe that $h'_4 = 2^{1/p} x_{1,2}$ and
$\bigl\|P_{s_3} T h'_4\bigr\| \leq \varepsilon/32$. Choose $s_4 > s_3$ so that $\bigl\| T h_4' - P_{s_4} T h_4'\bigr\| \leq \varepsilon/32$. Then
$$
\bigl\|Th'_4 - \bigl(P_{s_4} - P_{s_3}\bigr) T h'_4 \bigr\|\leq
\frac{\varepsilon}{16}. \leqno (C_4)
$$

Further we analogously find a mean zero sign $x_{2,1}$ on $A_{2,1}$ such that $\displaystyle{\bigl\| P_{s_4} T x_{2,1} \bigr\| \leq \frac{\varepsilon}{64\cdot 2^{2/p}}}$. Then putting $A_{3,1} = x_{2,1}^{-1}(1)$, $A_{3,2} = x_{2,1}(-1)$, we obtain $h'_5 = 2^{2/p} x_{2,1}$ and $\bigl\|P_{s_4} T h'_5\bigr\| \leq \varepsilon/64$. Now choose $s_5 > s_4$ so that $\bigl\| T h_5' - P_{s_5} T h_5'\bigr\| \leq \varepsilon/64$, and obtain
$$
\bigl\|Th'_5 - \bigl(P_{s_5} - P_{s_4}\bigr) T h'_5 \bigr\|\leq
\frac{\varepsilon}{32}. \leqno (C_5)
$$

Continuing the procedure, we construct a sequence of integers $0 = s_1 < s_2 < \ldots$, a tree ${(A_{m,k})_{m=0}^\infty \,}_{k=1}^{2^m}$ of measurable sets $A_{n,k}\subseteq [0,1]$ which satisfies conditions (P1) and (P2), for which we have
$$
\Bigl\|Th'_n - \bigl( P_{s_n} - P_{s_{n-1}} \bigr) Th'_n \Bigr\|\leq \frac{\varepsilon}{2^n}. \leqno (C_n)
$$

Note that property (P4) defines the operator $\widetilde{S}$ on the system $(h_n')$. We show that $\widetilde{S}$ could extended by linearity and continuity on $L_p(\Sigma_1)$. Let $x = \sum_{n=1}^N \beta_n h'_n \in L_p (\Sigma_1)$ with $\|x\|=1$. Note that $|\beta_n| \leq 2$, because the Haar system is a monotone basis in $L_p$. Then by $(C_n)$, we obtain
\begin{align} \label{Ttildeprelim}
\bigl\|\widetilde{S} x \bigr\| &= \Bigl\| \sum_{n=2}^N \beta_n (P_{s_n} - P_{s_{n-1}} ) T h_n' \Bigr\|\\
&\leq \Bigl\| \sum_{n=2}^N \beta_n T h_n' \Bigr\| + \Bigl\| \sum_{n=2}^N \beta_n \bigl( T h_n' - (P_{s_n} - P_{s_{n-1}} ) T h_n' \bigr) \Bigr\| \notag \\
&\leq \|Tx\| + \max\limits_{n \geq 2} |\beta_n| \sum_{n=2}^N \bigl\| T h_n' - (P_{s_n} - P_{s_{n-1}} ) T h_n' \bigr\| \notag\\
&\leq \|Tx\| + 2 \sum_{n=2}^N \frac{\varepsilon}{2^n} < \|Tx\| + \varepsilon \notag
\end{align}

This also proves that $\widetilde{S}$ is well defined and bounded. Moreover, a similar computation to \eqref{Ttildeprelim} shows that, for some sequence $\varepsilon_L\to 0$ as $L\to\infty$, each $x \in L_p (\Sigma_1)$ of norm one and of the form $x = \sum_{n=L}^N \beta_n h'_n$ one has
\begin{equation} \label{Ttilde}
\bigl\|\widetilde{S} x \bigr\| \leq \|Tx\| + \varepsilon_L.
\end{equation}

It remains to verify that $\widetilde{S}$ satisfies (P3). Let $x = \sum_{n=1}^\infty \beta_n h'_n \in L_p(\Sigma_1)$ be a mean zero sign on $[0,1]$. Using the inequality
$$
|\beta_n| = \Bigl| \int_{[0,1]} h_n'^* x \, d \mu \Bigr| \leq \int_{[0,1]} |h_n'^* | \, d \mu = \|h_n'^* \|_1 \leq \|h_n'^*\|_q = 1,
$$
we obtain
\begin{align*}
\bigl\|\widetilde{S} x \bigr\| &= \Bigl\| \sum_{n=2}^\infty \beta_n (P_{s_n} - P_{s_{n-1}} ) T h_n' \Bigr\|\\
&\geq \Bigl\| \sum_{n=2}^\infty \beta_n T h_n' \Bigr\| - \Bigl\| \sum_{n=2}^\infty \beta_n \bigl( T h_n' - (P_{s_n} - P_{s_{n-1}} ) T h_n' \bigr) \Bigr\|\\
&\geq \|Tx\|- \sum_{n=2}^\infty \bigl\| T h_n' - (P_{s_n} - P_{s_{n-1}} ) T h_n' \bigr\|\\
&\geq 2 \delta - \sum_{n=2}^\infty \frac{\varepsilon}{2^n} = 2 \delta - \varepsilon \ge \delta
\end{align*}
if $\varepsilon\le \delta$. Thus, the desired properties of $\widetilde{S}$ are proved.

Now we are ready to define $S$, $(a_n)$ and $(u_n)$. Let $V: L_p \to L_p(\Sigma_1)$ be the linear isometry extending the equality $V h_n = h_n'$ for all possible values of indices ($V$ exists because of (P1) and (P2)). Set $S = \widetilde{S} \circ V$. Then, by \eqref{Ttildeprelim},  $\|Sx\|\le\|TVx\| + \varepsilon$ for all $x$ of norm one varifying (3). Also, (4) follows from \eqref{Ttilde}. Set $a_n = \|\widetilde{S} h_n' \|$ for all $n \in \mathbb N$,  and $u_n = \|\widetilde{S} h_n'\|^{-1} \widetilde{S} h_n'$ if $\widetilde{S} h_n' \neq 0$, and $u_n = \|y_{s_n}\|^{-1} y_n$ if $\widetilde{S} h_n' = 0$. By (P3) and (P4), $S$ satisfies (1) and (2) (one has $a_1 = 0$ because $Sh_1' = S h_1 = \widetilde{S} V h_1 = \widetilde{S} V h_1' = 0$).

If, moreover, $\|Tx\|\geq 2 \delta \|x\|$ for every sign $x$, then $\|Th'_n\|\geq 2\delta$ for all $n \geq 2$, and by $(C_n)$ we have
\begin{align*}
|a_n| = \|\widetilde{S} h_n\| &= \bigl\| (P_{s_n} - P_{s_{n-1}}) T h_n' \bigr\| \\
&\geq \|T h_n'\| - \|T h_n' - (P_{s_n} - P_{s_{n-1}}) T h_n' \| \geq 2 \delta - \frac{\delta}{2^{n-1}}\geq \delta.
\end{align*}
\end{proof}

Now we are ready to prove Theorem A. Recall that the only previously unknown case is $2<p,r<\infty$. Before starting the formal proof we would like to explain the idea. If
$T\in \mathcal L(L_p,\ell_r)$ is not narrow then, for some $\delta>0$ there is an operator $S\in \mathcal L(L_p,\ell_r)$ satisfying (1)-(3) of Proposition~\ref{le1:p,r>1}. Denote by $r_m=\sum_{k=1}^{2^m}\overline{h}_{2^m+k}$, $m=0,1\dots$, the Rademacher functions. Note that they are sent by $S$ to vectors with disjoint supports in $\ell_r$. For a fixed (large) $C$ and (even much larger) $N$ put
$$f=\frac{C}{\sqrt{N}}\sum_{n=2}^{2^{N+1}} \overline{h}_n=\frac{C}{\sqrt{N}}\sum_{m=0}^N r_m$$
and notice that since $r>2$, $\|Sf\|\le\|S\|CN^{1/r-1/2}$ is arbitrarily small (if $N$ is large enough). Also,
$f$ is approximately $C$ times a Gaussian variable and so has norm bounded by a constant (depending on $p$ only) times $C$. If $f$ were a sign we would be done, reaching a contradiction, but of course it is not. To remedy this, for each $\omega\in [0,1]$, we start adding the summands forming $f(\omega)$, namely $\frac{C}{\sqrt{N}}\overline{h}_n(\omega)$, one by one stopping whenever we leave the interval $[-1,1]$. If $C$ is large enough, we do leave this interval for the vast majority of the $\omega$-s. Whenever we leave the interval, since we just stopped the summation, we get that the absolute value of the stopped sum is 1, to within an error of $C/\sqrt{N}$. We thus got a function which is almost a sign and which, by the unconditionality of the $\ell_r$ basis, is sent to a function of norm smaller than $\|Sf\|$ which was arbitrarily small, reaching a contradiction to (2) of Proposition~\ref{le1:p,r>1}.

\begin{proof}[Proof of Theorem A for $2<p,r<\infty$.]
Suppose that an operator $T \in \mathcal L(L_p,\ell_r)$ is not narrow. Without loss of generality we may assume that $\|Tx\|_r\geq 2 \delta$ for each mean zero sign $x \in L_p$ on $[0,1]$ and some $\delta>0$. By Proposition \ref{le1:p,r>1} for $X = \ell_r$ and $x_n = e_n$, the unit vector basis of $\ell_r$ (and $\varepsilon\le \|T\|$, say), there exists an operator $S \in \mathcal L(L_p, \ell_r)$ with $\|S\|\le 2\|T\|$, which satisfies  conditions (1)-(3) of Proposition~\ref{le1:p,r>1} ((4) will not be used here). Since every normalized block basis of $(e_n)$ is isometrically equivalent to $(e_n)$ itself (see \cite[Proposition~2.a.1]{LTI}), we may and do assume that $u_n = e_n$.

Let $C>0$ and $N \in \mathbb N$. We denote $I_m^k = \supp \overline{h}_{2^m+k} = \bigl[ \frac{k-1}{2^m},\frac{k}{2^m} \bigr)$. We will define several objects depending on $N$ and $C$, and in order not to complicate the notation, we omit the indices $N$ and $C$. We start with the function
$$f=\frac{C}{\sqrt{N}}\sum_{n=2}^{2^{N+1}} \overline{h}_n.$$

Since $S$ has a special form, we obtain that
\begin{equation}\label{estimateSf}
\begin{split}
\|Sf\|_r&=
\Big\|\frac{C}{\sqrt{N}}\sum_{n=2}^{2^{N+1}} S\overline{h}_n\Big\|_r =
\Big\|\frac{C}{\sqrt{N}}\sum_{m=0}^N S\Big(\sum_{k=1}^{2^m} \overline{h}_{2^m+k}\Big)\Big\|_r\\
&=\frac{C}{\sqrt{N}} \Big(\sum_{m=0}^{N} \bigl\|S\Big(\sum_{k=1}^{2^m} \overline{h}_{2^m+k}\Big) \bigr\|_r^r\  \Big)^{1/r}\\
&\le\frac{C}{\sqrt{N}} \|S\| \Big(\sum_{m=0}^N \big\|\sum_{k=1}^{2^m} \overline{h}_{2^m+k} \big\|_p^r \Big)^{1/r}\\
&\le 2C\|T\|(N+1)^{1/r}N^{-1/2}.
\end{split}
\end{equation}

Since $r>2$, for $N$  large enough, $\|Sf\|_r$ is as small as we want.

Our goal is to select a subset $J \subseteq \{2, \ldots, 2^{N+1}\}$ so that the element $g = \frac{C}{\sqrt{N}}\sum_{n\in J} \overline{h}_n$ is very close to a sign. This will prove that $S$ fails (2) of Proposition~\ref{le1:p,r>1}, since by the special form of $S$,
$\|Sg\|_r\le\|Sf\|_r$ which was very small.

To achieve this goal we use a technique similar to a stopping time for a martingale. A similar method was first used in \cite{KS92}.

 Set
$$A=\Bigl\{\omega \in [0,1] : \max\limits_{1 \leq j \le 2^{N+1}} \Big| \frac{C}{\sqrt{N}} \sum_{i=1}^j \overline{h}_i(\omega) \Big| > 1 \Bigr\},$$
and
\begin{equation*}
\tau(\omega)=\begin{cases}
\min \Big\{ j \le 2^{N+1}:  \Big|\frac{C}{\sqrt{N}} \sum_{i=1}^j \overline{h}_i(\omega) \Big| > 1 \Big\},\ \  \text{if}\ \ \omega\in A,\\
2^N+k,\ \  \text{if}\ \  \omega\not\in A\ \  \text{and}\ \  \omega \in I_{N}^{k}.
\end{cases}
\end{equation*}

Observe that if $\tau(\omega) = 2^m + k$ then $ \omega \in I_m^k$. Indeed, this is clear if $ \omega\not\in A$. If $ \omega\in A$, then $$\Big|\frac{C}{\sqrt{N}}\sum_{i=1}^{2^m+k-1} \overline{h}_i(\omega)\Big|\le 1,$$
so $\overline{h}_{2^m+k}(\omega)\ne 0$ and thus  $ \omega \in I_{m}^{k}$.

Further, if there exists $\omega \in I_m^k$ with $\tau(\omega) \geq 2^m+k$ then for every $\xi \in I_m^k$ we have $\tau(\xi) \geq 2^m+k$. Indeed, since $ \omega \in I_m^k$, for every $i < 2^m+k$ and every $\xi\in I_m^k$ we have $\overline{h}_i(\omega) = \overline{h}_i(\xi)$. Thus,
$$
\Big|\frac{C}{\sqrt{N}}\sum_{i=1}^{2^m+k-1} \overline{h}_i(\xi)\Big| = \Big|\frac{C}{\sqrt{N}}\sum_{i=1}^{2^m+k-1} \overline{h}_i(\omega)\Big| \le 1.
$$
Thus, $\tau(\xi) \ge 2^m+k$.

Define a set $J$:
\begin{equation*}
\begin{split}
J&=\Big\{j = 2^m+k \leq 2^{N+1}: \exists\omega\in I_m^k\ \ \text{with}\ \ \tau(\omega) \geq j \Big\}\\
&= \Big\{j = 2^m+k \leq 2^{N+1}: \forall \xi \in I_m^k\ \ \ \ \tau(\xi) \geq j \Big\}.
\end{split}
\end{equation*}

Let $g:[0,1]\to\mathbb{R}$ be defined as:
$$
g(\omega) = \frac{C}{\sqrt{N}} \sum_{j \leq \tau(\omega)} \overline{h}_j(\omega).
$$

Since $ \overline{h}_{2^m+k}(\omega) = 0$ for every $\omega \notin I_m^k$, we have
$$
g(\omega)=\frac{C}{\sqrt{N}}\sum_{\{j = 2^m+k \leq \tau(\omega)\ \text{and} \  \omega \in I_m^k\}} \overline{h}_j(\omega) = \frac{C}{\sqrt{N}} \sum_{j \in J} \overline{h}_j(\omega).
$$
Thus, by the form of $S$ and \eqref{estimateSf},
\begin{equation} \label{duh38}
\begin{split}
\|Sg\|_r&=\Big\|\frac{C}{\sqrt{N}}\sum_{j \in J} S\overline{h}_j \Big\|_r \\
&\le\|Sf\|_r\le 2C\|T\|(N+1)^{1/r}N^{-1/2}.
\end{split}
\end{equation}

By the definitions of $\tau(\omega)$ and $g(\omega)$, for every $\omega\in A$ one has
$$
1<|g(\omega)|<1+\frac{C}{\sqrt{N}},
$$
and for every $\omega\in [0,1]\setminus A$, $g(\omega)\ne 0$, if $N$ is odd, being a non zero multiple of a sum of an odd number of $\pm 1$.

Define
$$\widetilde{g}(\omega)=\sgn(g(\omega)).$$
We have
$$\|g-\widetilde{g}\|_p\le \Big\|\frac{C}{\sqrt{N}}\mathbf{1}_A +\mathbf{1}_{[0,1]\setminus A}\Big\|_p.$$
Note that, by the Central Limit Theorem, for large $N$ we have
\begin{equation*}
\begin{split}
\mu([0,1]\setminus A)&=\mu\Big\{\omega: \Big|\frac{1}{\sqrt{N}}\sum_{m=1}^N\sum_{k=1}^{2^m} \overline{h}_{2^m+k}(\omega)\Big|\le \frac1C\Big\}\\
&\approx \frac{1}{\sqrt{2\pi}}\int_{-\frac1C}^{\frac1C} e^{-\frac{\omega^2}{2}} d\omega < \frac{1}{2C}.
\end{split}
\end{equation*}
Thus, for large $N$,
$$\big\|\mathbf{1}_{[0,1]\setminus A}\big\|_p\le\Big(\frac{1}{2C}\Big)^{1/p}.$$
Hence
$$\|g-\widetilde{g}\|_p\le \frac{C}{\sqrt{N}}+\Big(\frac{1}{2C}\Big)^{1/p}.$$
Thus, by \eqref{duh38},
\begin{equation*}
\begin{split}
\|S\widetilde{g}\|_r&\le\|Sg\|_r+\|S\|\|g-\widetilde{g}\|_p\\
&\le 2C\|T\|(N+1)^{1/r}N^{-1/2}+2\|T\|\Big(\frac{C}{\sqrt{N}}+\Big(\frac{1}{2C}\Big)^{1/p}\Big).
\end{split}
\end{equation*}
Since $r>2$, for every $\delta>0$, there exists $C>0$ and $N$ odd and large enough so that
\begin{equation*}
\|S\widetilde{g}\|_r<\delta.
\end{equation*}

It remains to observe that $\widetilde{g}$ is a mean zero sign on $[0,1]$. Indeed, the support of $\widetilde{g}$ is equal to $[0,1]$, since $N$ is odd. Observe that for every $\omega \in [0,1]$ and every $2^m+k \leq 2^{N+1}$,
$$
\overline{h}_{2^m+k}(\omega) = - \overline{h}_{2^{m}+(2^m-k)+1}(1-\omega).
$$
Thus, ${g}(\omega)=-{g}(1-\omega)$ for every $\omega\in[0,1]$, and
$$\mu\big(\big\{\omega\in[0,1]: {g}(\omega)>0\big\}\big)=\mu\big(\big\{\omega\in[0,1]: {g}(\omega)<0\big\}\big).$$
Thus, $\widetilde{g}$ is a mean zero  sign on $[0,1]$, and (2) of  Proposition~\ref{le1:p,r>1} fails.
\end{proof}

\section{$\ell_2$-strictly singular operators - Proof of Theorem~B} \label{secstrsng}

As we indicated in the introduction, the idea of the proof of Theorem~B  is very similar to that of the proof of Theorem~A. Suppose that $T:L_p\to X$ is $\ell_2$-strictly singular and not narrow. As in the proof of Theorem~A, we use Proposition~\ref{le1:p,r>1} to conclude that there exists  an operator $S$ of the particularly simple form which cannot be narrow,
quantified using a specific $\delta>0$. The $\ell_2$-strict singularity of $T$ and condition (4) of  Proposition~\ref{le1:p,r>1} guarantee that $S$ is $\ell_2$-strictly singular as well. This implies that there exists a function $f$ of the form
$$
f = \sum_{m=1}^N b_m r_m = \sum_{m=1}^N \sum_{k=1}^{2^m} b_m\overline{h}_{2^m+k},
$$
where $(r_n)$ is the Rademacher system, so that $\|Sx\|_X $ is arbitrarily small. Similarly as in the proof of
Theorem~A,  we construct a subset $J \subseteq \{2, \ldots 2^{N+1} \}$ so that the function $g = \sum_{2^m+k \in J} b_m \overline{h}_{2^m+k}$ is very close to a sign and $\|Sg\|_X \le \|Sf\|_X < \delta$, which gives us the desired contradiction.

\begin{proof}[Proof of Theorem~B]
Let $1<p<\infty$ and let $X$ be a Banach space with an unconditional basis $\{x_n\}_{n=1}^\infty$. Without loss of generality we assume that the suppression unconditionality constant of the basis is $1$; i.e., $\|\sum_{i\in\sigma'}a_ix_i\|\le \|\sum_{i\in\sigma}a_ix_i\|$ for all $\sigma'\subset\sigma$ and all coefficients $\{a_i\}_{i\in\sigma}$. Let $T:L_p\to X$ be an $\ell_2$-strictly singular operator and suppose the operator $T$ is not narrow. As in the proof of Theorem~A we may assume without loss of generality that $\|Tx\|_X\geq 2 \delta$ for each mean zero sign $x \in L_p$ on $[0,1]$ and some $\delta>0$. Therefore, there exist  operators $S \in \mathcal L(L_p, X)$ and $V\in \mathcal L(L_p)$ which satisfy conditions (1)-(4) of Proposition~\ref{le1:p,r>1} (for $\varepsilon=\|T\|$, say).

Since $T$ is $\ell_2$-strictly singular, $TV$ is $\ell_2$-strictly singular as well and it then easily follows from (4) of Proposition~\ref{le1:p,r>1} that $S$ is also $\ell_2$-strictly singular. It follows that for every $\varepsilon>0$, $C>0$ there exists
$$
f = \sum_{m=1}^N b_m r_m,
$$
so that
\begin{equation}\label{bmain}
\|f\|_p=1, \  \|Sf\|_X<\frac{\delta}{2C} \ \  \text{and}\ \ \max_{n\le N} |b_n|\le \varepsilon.
\end{equation}
Indeed,
fix an $M\in\mathbb N$ to be determined momentarily, pick disjoint $\sigma_k\subset \mathbb N$ each of size $M$, $k=1,2,\dots$,
and put $f_k= M^{-1/2}\sum_{n\in \sigma_k} r_n$. Note that the sequence $\{f_k\}$ is equivalent to an orthonormal basis so the subspace $H$ spanned by the $f_k$-s is isomorphic to a Hilbert space and so an $f$ satisfying the first two requirements in
\eqref{bmain} can be chosen as $f=\sum_{k=1}^K a_kf_k$. Since $\|f\|=1$, the $a_k$ are uniformly bounded (by a constant depending only on $p$) and if $M$ is large enough then the coefficients of $f$ with respect to the Rademacher system, $a_k/\sqrt{M}$ are smaller than $\varepsilon$.

As in the proof of Theorem~A we set
$$
A = \Bigl\{\omega \in [0,1] : \max\limits_{1 \leq 2^m+k \leq 2^{N+1}} \Big| C \sum_{2^n+i=2}^{2^m+k} b_n \overline{h}_{2^n+i}(\omega) \Big| > 1 \Bigr\},
$$

\begin{equation*}
\tau(\omega) = \begin{cases}
\min \Big\{2^m+k \leq 2^{N+1}: \,\, \Big|C \sum\limits_{2^n+i=2}^{2^m+k} b_n \overline{h}_{2^n+i}(\omega) \Big| > 1 \Big\},\ \  \text{if}\ \ \omega \in A,\\
2^N+k,\ \  \text{if}\ \  \omega\not\in A\ \  \text{and}\ \  \omega \in I_{N}^{k},
\end{cases}
\end{equation*}

\begin{equation*}
\begin{split}
J = \Big\{j = 2^m+k \leq 2^{N+1}: \exists\omega\in I_m^k\ \ \text{with}\ \ \tau(\omega) \geq j \Big\}
\end{split}
\end{equation*}
and
$$
g(\omega) = C \sum_{2^m+k \leq \tau(\omega)} b_m \overline{h}_{2^m+k}(\omega) = C \sum_{2^m+k \in J} b_m \overline{h}_{2^m+k}(\omega).
$$

Thus, by the fact that S sends the Haar functions to functions with disjoint support with respect to the basis $\{x_i\}$, by the $1$-unconditionality of this basis and by \eqref{bmain},
\begin{equation*}
\|Sg\|_X\le C\|Sf\|_X<\frac\delta2.
\end{equation*}

Note that as $\varepsilon\downarrow 0$, by the Central Limit Theorem with the Lindeberg condition (see e.g. \cite[Theorem~27.2]{Bill}), $f$ converges to a Gaussian random variable so, if $\varepsilon$ is small enough (independently of $C$),
\begin{equation*}
\begin{split}
\mu([0,1]\setminus A) & \le \mu \Big\{\omega: \big|\sum_{n=1}^N\sum_{i=1}^{2^n}b_n \overline{h}_{2^n+i}(\omega)\big|\le \frac1C\Big\}\approx \frac{1}{\sqrt{2\pi}}\int_{-\frac1C}^{\frac1C} e^{-\frac{\omega^2}{2}} d\omega\le \frac{1}{2C}.
\end{split}
\end{equation*}

Let $[0,1]\setminus A=A_1\sqcup A_2$, where $\mu(A_1)=\mu(A_2)$, and define
\begin{equation*}
\widetilde{g}(\omega)=
\begin{cases} \sgn(g(\omega))\ \  &{\rm if}\ \ \omega\in A\\
1\ \  \ \ &{\rm if}\ \ \omega\in A_1\\
-1\ \  &{\rm if}\ \ \omega\in A_2.
\end{cases}
\end{equation*}

Then  $\widetilde{g}$ is a mean zero sign on $[0,1]$ and
 for every $\omega\in A$,
$$1<|g(\omega)|<1+C\varepsilon.$$
Thus
$$\|g-\widetilde{g}\|_p\le C\varepsilon+\Big(1+\frac1C\Big)\Big(\frac{1}{2C}\Big)^{1/p},$$
and
\begin{equation*}
\begin{split}
\|S(\widetilde{g})\|_X&\le\|S(g)\|_X+\|S\|\|g-\widetilde{g}\|_p\\
&\le \frac\delta2+2\|T\|\Big(C\varepsilon+\Big(1+\frac1C\Big)\Big(\frac{1}{2C}\Big)^{1/p}\Big).
\end{split}
\end{equation*}
Hence there exist $C>0$ and $\varepsilon$  so that
\begin{equation*}
\|S\widetilde{g}\|_X<\delta,
\end{equation*}
which contradicts our assumption about $S$ and hence $T$.
\end{proof}

\section{Banach spaces $X$ so that every operator $T \in \mathcal L (L_p,X)$ is narrow.}

In this section we discuss the following question:

 \textit{For what Banach spaces $X$, is every operator $T \in \mathcal L (L_p,X)$  narrow?}

Following \cite{KP92}, we denote the class of such spaces by $\mathcal M_p$. As was shown in \cite{KP92}, the following spaces belong to these classes:

\begin{enumerate}
  \item \textit{$c_0 \in \mathcal M_p$ for any $1 \leq p < \infty$};
  \item \textit{$L_r \in \mathcal M_p$ if $1 \leq p < 2$ and $p < r$.}
\end{enumerate}

Moreover, the result in (2) is sharp: $L_r \notin \mathcal M_p$ if $p \geq 2$, or $1 \leq p < 2$ and $p \geq r$, as Example~\ref{R:Sch} shows.

An easy argument (see \cite[p.~63]{PP}) implies that

\begin{enumerate}
  \item[(3)] \textit{$\ell_r \in \mathcal M_p$ for any $1 \leq p < \infty$ and $1 \leq r < 2$.}
\end{enumerate}

Indeed, assume that $T \in \mathcal L(L_p, \ell_r)$ and $A \in \Sigma$. Consider a Rademacher system $(r_n)$ on $L_p(A)$. Since $[r_n]$ is isomorphic to $\ell_2$, by Pitt's theorem, the restriction $T|_{[r_n]}$ is compact, and hence, $\lim_{n \to \infty} \|Tr_n\| = 0$.

Theorem A asserts that

\begin{enumerate}
  \item[(4)] \textit{$\ell_r \in \mathcal M_p$ for $p,r \in (2, \infty)$.}
\end{enumerate}

Note thet the above results (1), (3) for $1<p<\infty$, and (4) also follow from Theorem B. Indeed, Theorem B implies that

\begin{enumerate}
  \item[(5)] \textit{if $X$ has an unconditional basis and does not contain an isomorphic copy of $\ell_2$, then $X \in \mathcal M_p$ for $p\in (1, \infty)$.}
\end{enumerate}

The inclusion embedding operator $J_{p,2}$ from $L_p$ to $L_2$ for $p \geq 2$ is non-narrow by definition, and so is its composition $S \circ J_{p,2}$ with an isomorphism $S: L_2 \to \ell_2$. Thus, we have

\begin{enumerate}
  \item[(6)] \textit{$\ell_2 \notin \mathcal M_p$ for $p \geq 2$.}
\end{enumerate}

However, in spite of the existence of  non-narrow operators from $L_p$ to $\ell_2$ if $p > 2$, all these operators must be small in the following sense.

\begin{defn} \label{dsignemb}
We say that an operator $T \in \mathcal L \bigl(L_p,X \bigr)$ is a \textit{sign embedding} if $\|Tx\| \geq \delta \|x\|$ for some $\delta > 0$ and every sign $x \in L_p$.
\end{defn}
Note that in some papers (see \cite{Ros81/82}, \cite{Ros83}) H.~P.~Rosenthal studied the notion of a sign embedding defined on $L_1$, but in his definition an additional assumption of injectivity of $T$ was required. Formally, this is not the same, and there is an operator on $L_1$ that is bounded from below at signs and is not injective (and even has a kernel isomorphic to $L_1$), see \cite{MP2}. But  if an operator on $L_1$ is bounded from below at signs as in Definition~\ref{dsignemb}, then there exists $A \in \Sigma^+$ such that the restriction $T \bigr|_{L_1(A)}$ is injective, and hence is a sign embedding in the sense of Rosenthal (cf. \cite{MP2}).

\begin{prop} \label{easyprop2}
Let $2 < p < \infty$. Then there does not exist an operator $T \in \mathcal L(L_p, \ell_2)$ which is a  sign embedding.
\end{prop}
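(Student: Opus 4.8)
The plan is to argue by contradiction, combining the fact that $\ell_2$ is of type~$2$ with the machinery of Theorem~\ref{A} and the stopping-time construction used in the proof of Theorem~A. Suppose $T \in \mathcal L(L_p,\ell_2)$ is a sign embedding, so $\|Tx\| \geq \delta\|x\|$ for every sign $x$ and some $\delta>0$. In particular $T$ is not narrow, and indeed $\|Tx\|\geq 2\delta'$ for every mean zero sign $x$ on $[0,1]$ (after rescaling $\delta$). Since a sign embedding satisfies the hypothesis of Proposition~\ref{le1:p,r>1} in its stronger form ($\|Tx\|\geq 2\delta\|x\|$ for every sign), we obtain an operator $S \in \mathcal L(L_p,\ell_2)$ of the special diagonal form $Sh_n = a_n u_n$ with $(u_n)$ a normalized block basis of the unit vector basis of $\ell_2$ (hence, by \cite[Proposition~2.a.1]{LTI}, we may take $u_n = e_n$), with $|a_n|\geq \delta$ for all $n\geq 2$, and with $\|Sx\|\geq \delta$ for every mean zero sign~$x$.

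Next I would run exactly the stopping-time argument from the proof of Theorem~A, but now with $r=2$ replaced by the type~$2$ inequality. Form $f = \frac{C}{\sqrt{N}}\sum_{n=2}^{2^{N+1}}\overline{h}_n = \frac{C}{\sqrt{N}}\sum_{m=0}^N r_m$; since $S$ sends the Rademacher functions $r_m$ to vectors with disjoint supports in $\ell_2$, the type~$2$ (in fact Hilbert space) estimate gives
$$
\|Sf\|_2 = \frac{C}{\sqrt{N}}\Bigl(\sum_{m=0}^N \bigl\|S r_m\bigr\|_2^2\Bigr)^{1/2} \leq \frac{C}{\sqrt{N}}\,\|S\|\,\Bigl(\sum_{m=0}^N \|r_m\|_p^2\Bigr)^{1/2} \le \|S\|\,C\,\Bigl(\frac{N+1}{N}\Bigr)^{1/2},
$$
which, unlike the $r>2$ case, does \emph{not} tend to zero as $N\to\infty$ — this is precisely where the proof must diverge from that of Theorem~A. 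So instead I would \emph{not} try to make $\|Sf\|$ small; rather, I would use the stopping time to extract from $f$ an element $g = \frac{C}{\sqrt N}\sum_{j\in J}\overline h_j$ which is within $L_p$-distance $\frac{C}{\sqrt N} + (2C)^{-1/p}$ of a mean zero sign $\widetilde g$ on $[0,1]$ (the set $A$ where the running sum exits $[-1,1]$ has complement of measure $<\frac1{2C}$ by the CLT, exactly as before, and $N$ is taken odd). The key point is now the \emph{lower} bound: since $J\ni j = 2^m+k$ exactly when the partial sums up to index $2^m+k-1$ stayed in $[-1,1]$ on $I_m^k$, the indices in $J$ form an initial segment of the tree, and $g$ is a \emph{martingale-stopped} version of $f$. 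Using the diagonal form $S h_n = a_n e_n$ and Hilbertianity,
$$
\|Sg\|_2^2 = \frac{C^2}{N}\sum_{j\in J} |a_j|^2 \cdot \|S\overline h_j / S h_j\|\text{-normalization} \ge \frac{\delta^2 C^2}{N}\,\card(J)'
$$
where $\card(J)$ counts (with the appropriate $L_p$-normalization weights $2^{m(1-2/p)}$ coming from $\overline h_n$ vs $h_n$) the stopped tree, and one checks $\card$-type sum is $\gtrsim$ a constant multiple of $N$ once $C$ is large (most $\omega$ exit, so the stopped sums have $L_2$-mass comparable to that of $f$ itself). Hence $\|Sg\|_2 \ge c\,\delta\,C$ for an absolute constant $c>0$.

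Finally I would combine the two estimates: on one hand $\|S\widetilde g\|_2 \ge \delta$ (since $\widetilde g$ is a mean zero sign, by property (2) of Proposition~\ref{le1:p,r>1}); on the other hand $\widetilde g = g + (\widetilde g - g)$ with $\|g-\widetilde g\|_p \le \frac{C}{\sqrt N} + (2C)^{-1/p}$, so $\|S\widetilde g\|_2 \ge \|Sg\|_2 - \|S\|\bigl(\tfrac{C}{\sqrt N} + (2C)^{-1/p}\bigr) \ge c\delta C - 2\|T\|\bigl(\tfrac{C}{\sqrt N} + (2C)^{-1/p}\bigr)$ — wait, this is a lower bound, not a contradiction. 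The actual contradiction comes the other way: $g$ is a vector of $L_p$-norm $\le \|f\|_p \approx (\text{const})\cdot C$ (it is dominated by $f$, itself roughly $C$ times a Gaussian), yet $\|Sg\|_2 \ge c\delta C$, which is fine; so the real squeeze must be that $\|Sg\|_2 \le \|Sf\|_2 \le \|S\| C\sqrt{(N+1)/N}$ is \emph{also} an upper bound of the same order $C$, and these are consistent. The correct contradiction, therefore, is obtained not from a single pair of bounds but by iterating or by a direct area argument: the \emph{main obstacle} I anticipate is exactly this — in the $\ell_2$ (type~$2$) case the crude norm of $Sf$ does not decay, so one cannot copy Theorem~A verbatim. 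The right fix is to appeal instead to Theorem~\ref{A}: since $\ell_2$ has type~$2 \ge p$... no, $p>2$, so that fails too. Given the difficulty, the cleanest route is the following: a sign embedding $T:L_p\to\ell_2$ composed with the diagonalization $S$ would, via the block-basis structure and Khintchine/type~2, force $\sum_n \|S h_n\|^2 < \infty$ to \emph{fail} to be reconcilable with $\|S h_n\| \ge \delta$ — precisely, $S$ restricted to $[r_n]$ (a copy of $\ell_2$) has $\|Sr_n\|\ge \delta\|r_n\|_p = \delta$, so $S$ is an isomorphism on a copy of $\ell_2$; but $S$ maps into $\ell_2$ diagonally, $[Sr_n] = [\sum_{k}a_{2^m+k} e_{2^m+k}]$, which being a block basis of $(e_n)$ in $\ell_2$ is isometric to $\ell_2$ — no contradiction yet. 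The genuine obstruction is Pitt-type: an isomorphic copy of $L_p$ (or its span-of-Haar structure) inside $L_p$ cannot be sent isomorphically into $\ell_2$ when $p>2$, because $L_p$ contains $\ell_p$ isometrically but $\ell_2$ contains no copy of $\ell_p$ for $p\ne 2$. Thus I would instead show directly: $T$ being a sign embedding on $L_p$, $1<p$, implies (by a known theorem of Rosenthal-type, or by the fact that the Haar span is $L_p$ and a sign embedding on $L_p$ is an isomorphism on a copy of $L_p$ — here invoking that the finitely-supported signs are dense enough) that $T$ restricted to a copy of $\ell_p$ is an isomorphism, contradicting that $\ell_2$ contains no isomorphic copy of $\ell_p$ for $p > 2$. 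The hard part will be justifying that a sign embedding from $L_p$ is automatically an isomorphism on some isomorphic copy of $\ell_p$; I expect this to follow from a gliding-hump / disjointification argument on signs with small disjoint supports, using that such signs span $\ell_p$ isometrically in $L_p$ and that on them $T$ is bounded below.
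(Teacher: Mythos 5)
There is a genuine gap. You set up the right machinery — Proposition~\ref{le1:p,r>1} in its stronger form, giving a diagonal operator $S$ with $Sh_n=a_nu_n$, $(u_n)$ a block basis of $(e_n)$ in $\ell_2$, and $|a_n|\ge\delta$ for $n\ge2$ — but then you never use it. You correctly observe that the stopping-time argument of Theorem~A collapses when $r=2$ (indeed $\|Sf\|_2$ does not decay), yet your replacement route ends with an unproven claim: that a sign embedding of $L_p$ is automatically an isomorphism on an isomorphic copy of $\ell_p$, which you explicitly defer (``I expect this to follow from a gliding-hump argument''). As stated this does not follow from the hypothesis: being bounded below on signs says nothing directly about linear combinations $\sum a_ix_i$ of disjointly supported signs, which are not signs, so ``bounded below on each $x_i$'' does not give ``bounded below on $[x_i]$''. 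A disjointification argument can be salvaged (disjoint signs are weakly null, so a gliding hump makes $(Tx_i)$ essentially a block basis of $\ell_2$ with norms $\ge\delta$, and then $\bigl\|T\sum_{i\le n}x_i\bigr\|\gtrsim\delta n^{1/2}$ versus $\bigl\|\sum_{i\le n}x_i\bigr\|_p= n^{1/p}$ contradicts boundedness of $T$, not the absence of $\ell_p$ inside $\ell_2$), but none of this is in your text, and the contradiction you aim at is aimed at the wrong target.

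The paper's proof is a two-line exploitation of exactly the structure you abandoned: with $S$ diagonal and $|a_{n,k}|\ge\delta$, take the mean zero signs $x_n=\sum_{k=1}^{2^n}2^{-n/p}h_{n,k}$ (the Rademacher-type functions). Since the vectors $Sh_{n,k}$, $k=1,\dots,2^n$, are disjointly supported in $\ell_2$,
\begin{equation*}
\|Sx_n\|_2=\Bigl(\sum_{k=1}^{2^n}2^{-2n/p}|a_{n,k}|^2\Bigr)^{1/2}\ge\delta\,2^{n(\frac12-\frac1p)}\longrightarrow\infty\quad(p>2),
\end{equation*}
contradicting the boundedness of $S$. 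No stopping time, no copy of $\ell_p$, no gliding hump is needed; the lower bound $|a_n|\ge\delta$ coming from the sign-embedding hypothesis is precisely what makes the one-level Haar sums blow up under $S$. Your write-up, by contrast, never reaches a contradiction and leaves its decisive lemma unproved.
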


\begin{proof}
Suppose on the contrary, that $T \in \mathcal L (L_p, \ell_2)$ and $\|Tx\| \geq 2\delta \|x\|$ for some $\delta > 0$ and each sign $x \in L_p$. Then by Proposition \ref{le1:p,r>1}, there exists an operator $S:L_p\to l_2$ which satisfies conditions $(1)$ and $(2)$ from Proposition \ref{le1:p,r>1}. Moreover,
$|a_{n,k}|\geq\delta$ for each $n=0,1,...$ and $k = 1, \ldots, 2^n$.

Let $x_n=\sum\limits_{k=1}^{2^n}2^{-n/p}h_{n,k}$. Note that
$x_n$ is a mean zero sign on $[0,1]$. Then
$Sx_n=\sum\limits_{k=1}^{2^n} 2^{-\frac np} a_{n,k}e_{n,k}$. Now we
have
$$
\|Sx_n\|\geq 2^{-\frac np}\delta 2^{\frac n2}=\delta 2^{n(\frac12 -
\frac 1p)}.
$$
Thus $\lim\limits_{n\to\infty}\|Sx_n\|=\infty$, which contradicts the boundedness of $S$.
\end{proof}

We summarize the above results.

\begin{remark} \label{finalfact}
$\,$
\begin{itemize}
  \item For every $1 \leq p < \infty$ every operator $T \in \mathcal L(L_p, c_0)$ is narrow.
  \item If $1 \leq p < 2$ and $p < r$ then every operator $T \in \mathcal L(L_p, L_r)$ is narrow, and this statement is not true for any other values of $p$ and $r$.
\item For   $p,r \in [1,2)\cup (2, +\infty)$, every operator $T \in \mathcal L(L_p, \ell_r)$ is narrow.
\item For $1<p< \infty$ and $X$ a Banch space with an unconditional basis which does not contain an isomorphic copy of $\ell_2$,   every operator $T :L_p\to X$ is narrow.
  \item For $2 < p < \infty$ there exists a non-narrow operator in $\mathcal L(L_p, \ell_2)$. However, there is no sign-embedding in $\mathcal L(L_p, \ell_2)$.
\end{itemize}
\end{remark}

\section{Gentle-narrow operators} \label{secGnarrow}

In this section we establish a weak sufficient condition for an operator $T \in \mathcal L(L_p)$
 to be narrow. More precisely, we prove Theorem C.
 For the proof, we need a few lemmas. First of them asserts that, in the definition of a gentle-narrow operator, in addition to properties (i)-(iii) we can claim one more property.

\begin{lemma} \label{pass}
Suppose $1 < p \leq 2$, $X$ is a Banach space and $T \in \mathcal L(L_p,X)$ is a gentle-narrow operator with a gentle function $\varphi: [0, +\infty) \to [0,1]$. Then for every $\varepsilon > 0$, every $M > 0$ and every $A \in \Sigma$ there exists $x \in L_p(A)$ such that the following conditions hold
\begin{enumerate}
  \item[(i)] $\|x\| = \mu(A)^{1/p}$;
  \item[(ii)] $\|x - x^M\| \leq \varphi (M) \, \mu(A)^{1/p}$;
  \item[(iii)] $\|Tx\| \leq \varepsilon$;
  \item[(iv)] $\displaystyle{\int_{[0,1]} x \, d \mu = 0}$.
\end{enumerate}
\end{lemma}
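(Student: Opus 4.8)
The plan is to take the function $x$ that Definition~\ref{d:dnar} already provides for the set $A$ and the given $\varepsilon, M$, and to \emph{symmetrize} it so as to produce a mean-zero function while spoiling none of (i)--(iii). The standard device is to split $A$ into two halves of equal measure and to combine two copies of the defining function, one on each half, with opposite signs. Concretely, first fix a measurable partition $A = A_1 \sqcup A_2$ with $\mu(A_1) = \mu(A_2) = \mu(A)/2$, together with a measure-preserving bijection $\pi \colon A_1 \to A_2$ (which exists since the Lebesgue measure on any two sets of equal measure is isomorphic). Apply the gentle-narrow hypothesis to the set $A_1$ and the parameters $M$ and $\varepsilon/2$, obtaining $x_1 \in L_p(A_1)$ with $\|x_1\| = \mu(A_1)^{1/p}$, $\|x_1 - x_1^M\| \le \varphi(M)\mu(A_1)^{1/p}$ and $\|Tx_1\| \le \varepsilon/2$. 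Transport $x_1$ to $A_2$ via $\pi$ and set $x_2(t) = x_1(\pi^{-1}(t))$ for $t \in A_2$; then define
$$
x = x_1 - x_2.
$$

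Now one checks the four properties in turn. Since $x_1$ and $x_2$ have disjoint supports $A_1$ and $A_2$, we have $\|x\|^p = \|x_1\|^p + \|x_2\|^p = 2\mu(A_1) = \mu(A)$, giving (i). The same disjointness, together with the identity $(x_1 - x_2)^M = x_1^M - x_2^M$ (truncation commutes with this construction because $x_1,x_2$ live on disjoint sets), yields $\|x - x^M\|^p = \|x_1 - x_1^M\|^p + \|x_2 - x_2^M\|^p \le 2\varphi(M)^p \mu(A_1) = \varphi(M)^p \mu(A)$, which is (ii). For (iii), since $\pi$ is measure preserving, $x_2$ has the same distribution as $x_1$; however $T$ need not respect the transport, so I cannot directly bound $\|Tx_2\|$ from $\|Tx_1\|$. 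This is the one genuine subtlety, and it is the reason the construction above is not quite enough as stated --- see the next paragraph for the fix. Property (iv), $\int x\,d\mu = 0$, is immediate once (iii) is arranged, because $\int x_1\,d\mu = \int x_2\,d\mu$ by the measure-preserving change of variables, so $\int x\,d\mu = 0$ regardless of the other estimates.

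\textbf{The main obstacle and its resolution.} The difficulty is that applying the hypothesis on $A_1$ controls $Tx_1$ but gives no control on $T$ of the \emph{transported} function $x_2$. The clean way around this is not to transport at all, but to apply the gentle-narrow hypothesis \emph{separately} on $A_1$ and on $A_2$: obtain $x_1 \in L_p(A_1)$ as above with $\|Tx_1\| \le \varepsilon/2$, and independently obtain $x_2 \in L_p(A_2)$ with $\|x_2\| = \mu(A_2)^{1/p}$, $\|x_2 - x_2^M\| \le \varphi(M)\mu(A_2)^{1/p}$ and $\|Tx_2\| \le \varepsilon/2$. Then $\|T(x_1 - x_2)\| \le \|Tx_1\| + \|Tx_2\| \le \varepsilon$, so (iii) holds for $x = x_1 - x_2$; properties (i) and (ii) hold by the disjoint-support computations above (now with $\mu(A_1) = \mu(A_2) = \mu(A)/2$); but we have lost (iv), since in general $\int x_1\,d\mu \ne \int x_2\,d\mu$. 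To recover the mean-zero property, apply the hypothesis \emph{once more} on all of $A$ to get a function $y \in L_p(A)$ with $\|y\| = \mu(A)^{1/p}$, $\|y - y^M\| \le \varphi(M)\mu(A)^{1/p}$, $\|Ty\| \le \varepsilon$, and then symmetrize $y$ itself using the rearrangement: more precisely, decompose $A$ into the level structure of $y$ and pair off. The cleanest formulation: choose a measure-preserving involution $\sigma$ of $A$ that interchanges $A_1$ and $A_2$, apply the hypothesis on $A$ to get $y$, and replace $y$ by $\tilde y(t) = \tfrac12\bigl(y(t) - y(\sigma t)\bigr)$ --- but this again breaks the $T$-estimate.

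I therefore expect the correct argument to be the following three-step one, which I would write out in full: (1) obtain $x_1$ on $A_1$ and $x_2$ on $A_2$ as above with $\|Tx_i\| \le \varepsilon/4$, so that $u := x_1 - x_2 \in L_p(A)$ satisfies (i), (ii) and $\|Tu\| \le \varepsilon/2$; (2) note $c := \int_{[0,1]} u\, d\mu$ satisfies $|c| \le \|u\|_1 \le \|u\|_p = \mu(A)^{1/p}$, but in fact we can do much better by choosing $x_2$ to be (a transported copy of) $-x_1$ composed with a measure-preserving map --- reverting to the transport idea but now transporting \emph{through a measure-preserving map of $[0,1]$ that also moves the $T$-image negligibly} is not available, so instead (2') observe that the \emph{sign} $z$ on $A$ defined by $z = \mathbf 1_{A_1} - \mathbf 1_{A_2}$ has $\int z\,d\mu = 0$ and $\|z\| = \mu(A)^{1/p}$, and form $x = u - (\langle$ correction $\rangle)z$ to kill the mean, then absorb the resulting perturbation of $\|Tx\|$ and of the truncation estimate by having taken $\varepsilon$ and the truncation slack with room to spare at the start, rescaling $x$ at the end to restore $\|x\| = \mu(A)^{1/p}$ exactly. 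Because the correction coefficient is at most $|c|/\mu(A)^{1/p}$, which by a Central-Limit / concentration heuristic on the defining functions one expects to be small --- and in any case can be made small by first applying the hypothesis on finer and finer partitions of $A$ and averaging, exactly as in the transfinite construction in the proof of Theorem~\ref{A} --- the perturbation is controllable. The genuinely load-bearing step is this last one: arranging that a mean-zero modification can be made without destroying (ii) and (iii), and I anticipate spending the bulk of the proof on making the averaging-over-a-fine-partition argument precise so that the mean of the constructed function is forced to zero up to an error that (iii)'s $\varepsilon$-budget and (ii)'s $\varphi(M)$-budget can absorb.
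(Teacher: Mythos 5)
Your proposal does not close the one step that actually matters, and the specific fix you sketch does not work. First, the correction you propose, $x = u - \lambda z$ with $z = \mathbf 1_{A_1} - \mathbf 1_{A_2}$, cannot kill the mean: since $\mu(A_1)=\mu(A_2)$ we have $\int z\,d\mu = 0$, so subtracting any multiple of $z$ leaves $\int u\,d\mu$ unchanged. Second, the hope that the mean becomes small "by a Central-Limit / concentration heuristic" or "by applying the hypothesis on finer and finer partitions and averaging" is unfounded: the functions supplied by Definition~\ref{d:dnar} are not random and carry no sign information, so on each small piece $A_k$ the chosen $x_k$ may well satisfy $\int x_k\,d\mu \approx \mu(A_k)$ (nothing in (i)--(iii) prevents $x_k\ge 0$), and summing over a fine partition the total mean can be of order $\mu(A)$, far too large to absorb into the $\varepsilon$- or $\varphi(M)$-budgets. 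You explicitly defer "the bulk of the proof" to exactly this point, so the argument as proposed is incomplete precisely where the lemma is nontrivial.

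The paper's proof handles the mean with two cheap moves your sketch is missing, and the fine partition is used for a different purpose than you intend. Partition $A$ into $n$ pieces of equal measure with $(\mu(A)/n)^{1/p}<\varepsilon/4$ and pick $x_k$ on each piece with $\|Tx_k\|<\varepsilon/(2n)$; reorder so that $\delta := \bigl|\int x_n\,d\mu\bigr|$ is maximal. Then (a) choose signs $\theta_k=\pm1$ greedily so that every partial sum $\bigl|\int \sum_{i\le k}\theta_i x_i\,d\mu\bigr|\le\delta$ for $k\le n-1$; and (b) on the last piece replace $x_n$ by $r x_n$, where $r$ is a measurable $\pm1$-valued sign on $A_n$ chosen (by an intermediate-value argument, since $\int r x_n\,d\mu$ sweeps out an interval containing $[-\delta,\delta]$ as $r$ varies) so that $\int r x_n\,d\mu$ exactly cancels $\int\sum_{i<n}\theta_i x_i\,d\mu$. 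Both operations multiply the $x_k$ only by unimodular measurable factors, so $|x|$ is unchanged pointwise and (i), (ii) survive exactly; the only cost is in (iii), namely $\|T(r x_n - x_n)\|\le\|T\|\,\|2x_n\|<\varepsilon/2$, which is small precisely because the partition is fine --- that, not mean-smallness, is what the fine partition buys. Your two-half symmetrization and transport ideas correctly identify the tension between controlling $T$ and controlling the mean, but without the sign-balancing step (a) and the exact cancellation step (b) the lemma is not proved.
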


\begin{proof}[Proof of Lemma \ref{pass}]
Without loss of generality we assume that $\|T\| = 1$. Fix $\varepsilon > 0$, $M > 0$ and $A \in \Sigma$. Choose $n \in \mathbb N$ so that $\bigl( \mu(A)/n \bigr)^{1/p} < \varepsilon/4$ and decompose $A = A_1 \sqcup \ldots \sqcup A_n$ with $A_k \in \Sigma$ and $\mu(A_k) = \mu(A)/n$ for every $k = 1, \ldots, n$. Using the definition of a gentle-narrow operator, for each $k = 1, \ldots, n$ we choose $x_k \in L_p(A_k)$ so that $\|x_k\|^p = \mu(A)/n$, $\bigl\| x_k - x_k^M \bigr\| \leq \varphi(M) \, \mu(A)^{1/p}$, and $\|Tx_k\| < \varepsilon/(2n)$.

Without loss of generality, we may and do assume that
$$
\delta = \Bigl| \int_{[0,1]} x_n \, d \mu \Bigr| \geq \Bigl| \int_{[0,1]} x_k \, d \mu \Bigr|
$$
for $k = 1, \ldots, n-1$ (otherwise we rearrange $A_1, \ldots, A_n$). Observe that
\begin{equation*} 
\|x_k\| = \Bigl( \frac{\mu(A)}{n} \Bigr)^{1/p} < \frac{\varepsilon}{4} \, .
\end{equation*}

We choose inductively sign numbers $\theta_1 = 1$ and $\theta_2, \ldots, \theta_{n-1} \in \{-1,1\}$ so that for each $k = 1, \ldots, n - 1$ we have
$$
\Bigl| \int_{[0,1]} \sum\limits_{i=1}^k \theta_i x_i \, d \mu \Bigr| \leq \delta .
$$

Pick a sign $r$ on $A_n$ so that
\begin{equation} \label{march1}
\int_{[0,1]} r x_n \, d \mu = - \int_{[0,1]} \sum\limits_{i=1}^{n-1} \theta_i x_i \, d \mu
\end{equation}
(This is possible, because
$$
\Bigl| \int_{[0,1]} \sum\limits_{i=1}^{n-1} \theta_i x_i \, d \mu \Bigr| \leq \delta = \Bigl| \int_{[0,1]} x_n \, d \mu \Bigr| \,\,\, \Bigr).
$$

We set $x = \sum_{i=1}^{n-1} \theta_k x_k + r x_n$ and show that $x$ satisfies the desired properties.

(i) $\displaystyle{\,\,\, \|x\|^p = \sum\limits_{k=1}^n \|x_k\|^p = n \cdot \frac{\mu(A)}{n} = \mu(A)}$.

(ii) $\displaystyle{\|x - x^M\|^p = \sum\limits_{k = 1}^n \|x_k - x_k^M\|^p \leq n \cdot \bigl( \varphi(M) \bigr)^p \cdot \frac{\mu(A)}{n} = \bigl( \varphi(M) \bigr)^p \mu(A)}$.

(iii) Since $\|Tx_k\| < \varepsilon/(2n)$, for $k\le n$, $\|T\| = 1$ and by the definition of $x$ we get
\begin{align*}
\|Tx\| \leq \sum\limits_{k=1}^{n-1} \|Tx_k\| + \|T r x_n\| \leq \sum\limits_{k=1}^n \|Tx_k\| + \|x_n - r x_n\| < \frac{\varepsilon}{2} + \|2x_n\| < \varepsilon.
\end{align*}

Property (iv) for $x$ follows from \eqref{march1}.
\end{proof}

\begin{lemma} \label{le:march3}
Assume $1 < p \leq 2$, $a > 0$ and $|b| \leq a$. Then
\begin{equation} \label{kon}
(a+b)^p - p \, a^{p-1} b \geq a^p + \frac{p \, (p-1)}{2^{3-p}} \cdot \frac{b^2}{a^{2-p}} \, .
\end{equation}
\end{lemma}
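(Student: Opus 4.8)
The plan is to reduce the two‑variable inequality \eqref{kon} to a one‑variable statement by homogeneity and then verify it on the interval $[-1,1]$ by analysing an auxiliary function. First I would observe that both sides of \eqref{kon} are homogeneous of degree $p$ in $(a,b)$: replacing $(a,b)$ by $(\lambda a,\lambda b)$ with $\lambda>0$ multiplies every term by $\lambda^p$. Hence, dividing through by $a^p$ and writing $t=b/a\in[-1,1]$, the claim becomes
\begin{equation*}
(1+t)^p - p\,t \;\geq\; 1 + \frac{p(p-1)}{2^{3-p}}\,t^2
\qquad\text{for all } t\in[-1,1].
\end{equation*}
So it suffices to prove this scalar inequality for $1<p\leq 2$.

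Next I would set $g(t) = (1+t)^p - pt - 1 - \frac{p(p-1)}{2^{3-p}}t^2$ and show $g(t)\geq 0$ on $[-1,1]$. One computes $g(0)=0$ and $g'(0)=0$, so $t=0$ is a critical point; the natural route is to examine $g''$. We have $g''(t) = p(p-1)(1+t)^{p-2} - \frac{p(p-1)}{2^{2-p}}$, so $g''(t)\geq 0$ exactly when $(1+t)^{p-2}\geq 2^{p-2}$, i.e. (since $p-2<0$) when $1+t\leq 2$, which holds for all $t\in[-1,1]$. Therefore $g$ is convex on $[-1,1]$, and a convex function with $g(0)=g'(0)=0$ is nonnegative on any interval containing $0$; this gives $g\geq 0$ on $[-1,1]$ and hence \eqref{kon}. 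I should also check the boundary case where $a+b$ could vanish, i.e. $t=-1$: there $(1+t)^p=0$ is still fine since $p>0$, so no separate treatment is needed; and the degenerate case $a=0$ forces $b=0$ by $|b|\leq a$, making \eqref{kon} trivially an equality $0\geq 0$.

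The only mildly delicate point is the convexity computation, specifically making sure the constant $2^{3-p}$ in the denominator is exactly what makes $g''$ nonnegative on the \emph{whole} interval $[-1,1]$ rather than just near $0$: the inequality $(1+t)^{p-2}\geq 2^{p-2}$ must hold at the worst point $t=1$, which is precisely where $1+t=2$, so the constant is sharp for this argument. I do not anticipate any real obstacle beyond bookkeeping; the homogeneity reduction and the "convex with a double zero at the origin" observation carry the proof. (If one prefers to avoid second derivatives, an equivalent route is to write $(1+t)^p - pt - 1 = \int_0^t \bigl((1+s)^{p-1}-1\bigr)\,p\,ds$ and bound the integrand using $(1+s)^{p-1}-1 \geq \frac{p-1}{2^{2-p}}s$ for $s$ of the appropriate sign, obtained from convexity/concavity of $s\mapsto (1+s)^{p-1}$ on $[-1,1]$; integrating yields the quadratic lower bound with the stated constant.)
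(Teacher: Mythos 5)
Your proposal is correct and follows essentially the same route as the paper: divide by $a^p$, set $t=b/a\in[-1,1]$, and show the auxiliary function $(1+t)^p-pt-1-\frac{p(p-1)}{2^{3-p}}t^2$ is nonnegative via its second derivative, using $f'(0)=f(0)=0$. The only cosmetic point is your parenthetical ``since $p-2<0$'', which excludes $p=2$; but there $(1+t)^{p-2}=2^{p-2}=1$ and the inequality \eqref{kon} is an identity, so nothing needs changing.
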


\begin{proof}[Proof of Lemma \ref{le:march3}]
Dividing the inequality by $a^p$ and denoting $t = b/a$, we pass to an equivalent inequality
$$
f(t) \stackrel{\rm def}{=} (1 + t)^p - p \, t - 1 - \frac{p \, (p-1)}{2^{3-p}} \, t^2 \geq 0
$$
for each $t \in [-1,1]$, which we have to prove. Observe that
$$
f'(t) = p \, (1 + t)^{p-1} - p - \frac{p \, (p-1)}{2^{2-p}} \, t \,\,\,\,\, \mbox{and} \,\,\,\,\, f''(t) = \frac{p \, (p-1)}{(1+t)^{2-p}} - \frac{p \, (p-1)}{2^{2-p}} \, .
$$

Since $f''(t) > 0$ for every $t \in (-1,1)$ and $f'(0) = 0$, we have that $t_0 = 0$ is the point of a global minimum of $f(t)$ on $[-1,1]$. Since $f(0) = 0$, the inequality \eqref{kon} follows.
\end{proof}

\begin{lemma} \label{doubt}
Assume $1 < p \leq 2$, $A \in \Sigma$, $a \neq 0$, $y \in L_\infty(A)$, $|y(t)| \leq |a|$ for each $t \in A$, and $\displaystyle{\int_{[0,1]} y \, d \mu = 0}$. Then
$$
\bigl\| a \mathbf{1}_A + y \bigr\|_p^p \geq |a|^p \mu(A) + \frac{p \, (p-1)}{2^{3-p}} \cdot \frac{\|y\|_2^2}{|a|^{2-p}} \, .
$$
\end{lemma}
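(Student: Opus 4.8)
The plan is to deduce Lemma~\ref{doubt} from the pointwise inequality of Lemma~\ref{le:march3} by integrating over the set $A$ and then using the mean-zero hypothesis on $y$ to kill the linear term. Concretely, for each $t \in A$ write $a\mathbf{1}_A(t) + y(t) = a + y(t)$ and apply Lemma~\ref{le:march3} with ``$a$'' replaced by $|a|$ and ``$b$'' replaced by $\sgn(a)\,y(t)$; the hypothesis $|y(t)| \le |a|$ guarantees $|b| \le |a|$, so the lemma applies. Since $|a+y(t)|^p = \bigl||a| + \sgn(a)y(t)\bigr|^p = (|a| + \sgn(a)y(t))^p$ (the base is nonnegative because $|\sgn(a)y(t)| \le |a|$), this gives
\begin{equation*}
\bigl| a + y(t) \bigr|^p \;\ge\; |a|^p + p\,|a|^{p-1}\,\sgn(a)\,y(t) + \frac{p\,(p-1)}{2^{3-p}}\cdot\frac{y(t)^2}{|a|^{2-p}}
\end{equation*}
for every $t \in A$.

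Next I would integrate this inequality over $A$ with respect to $\mu$. The left-hand side integrates to $\|a\mathbf{1}_A + y\|_p^p$, since $y$ is supported on $A$ (so $a\mathbf{1}_A + y$ vanishes off $A$). On the right-hand side: the constant term gives $|a|^p \mu(A)$; the quadratic term gives $\frac{p(p-1)}{2^{3-p}\,|a|^{2-p}}\int_A y^2\,d\mu = \frac{p(p-1)}{2^{3-p}\,|a|^{2-p}}\|y\|_2^2$ (again because $\supp y \subseteq A$); and the crucial linear term is
\begin{equation*}
p\,|a|^{p-1}\,\sgn(a)\int_A y\,d\mu \;=\; p\,|a|^{p-1}\,\sgn(a)\int_{[0,1]} y\,d\mu \;=\; 0
\end{equation*}
by the assumption $\int_{[0,1]} y\,d\mu = 0$. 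Assembling these three pieces yields exactly the claimed bound $\|a\mathbf{1}_A + y\|_p^p \ge |a|^p\mu(A) + \frac{p(p-1)}{2^{3-p}}\cdot\frac{\|y\|_2^2}{|a|^{2-p}}$.

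I do not expect a genuine obstacle here — the lemma is essentially a bookkeeping consequence of Lemma~\ref{le:march3} together with linearity of the integral. The only points that require a moment's care are: (a) verifying that the inner base $|a| + \sgn(a)y(t)$ is nonnegative so that raising it to the $p$-th power is unambiguous and matches $|a+y(t)|^p$, which follows from $|y(t)| \le |a|$; (b) making sure the reduction to Lemma~\ref{le:march3} correctly handles the sign of $a$, which the substitution $b = \sgn(a)y(t)$ takes care of since $|b| = |y(t)| \le |a|$; and (c) that $y \in L_\infty(A)$ together with $|y| \le |a|$ guarantees all integrals are finite, so the term-by-term integration is legitimate. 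Since $y$ is supported on $A$, restricting every integral to $A$ versus $[0,1]$ makes no difference except in the linear term, where extending to $[0,1]$ is precisely what lets us invoke the mean-zero hypothesis.
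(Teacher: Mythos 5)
Your proof is correct and follows essentially the same route as the paper: integrate the pointwise inequality of Lemma~\ref{le:march3} applied with $b = y(t)$ over $A$ and use the mean-zero hypothesis to annihilate the linear term (the paper simply reduces to $a>0$ first, which your substitution $b=\sgn(a)\,y(t)$ handles equivalently).
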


\begin{proof}[Proof of Lemma \ref{doubt}]
Evidently, it is enough to consider the case when $a > 0$ which one can prove by integrating inequality \eqref{kon} written for $b = y(t)$.
\end{proof}

\begin{lemma} \label{random}
Let $1 < p \leq 2$, let $T \in \mathcal L (L_p)$ be a gentle-narrow operator with a gentle function $\varphi: [0, +\infty) \to [0,1]$ and $\|T\| = 1$. Then for all $M > 0$,   $\delta > 0$,   $B \in \Sigma^+$,   $\eta \in (0, 1/2)$, and   $y \in L_p$ with $\eta \leq |y(t)| \leq 1 - \eta$ for all $t \in B$, there exists $h \in B_{L_\infty(B)}$ such that:

\begin{enumerate}
  \item $\displaystyle{\|Th\| < 2 \, \mu(B)^{1/p} \frac{\varphi(M)}{M}}$;
  \item $\displaystyle{\|y \pm \eta h\|^p > \|y\|^p + \frac{p \, (p-1)}{2^{3-p}} \cdot \frac{\eta^2}{(1 - \eta)^{2-p}} \cdot \mu(B) \cdot \frac{\bigl( 1 - \varphi(M) \bigr)^2}{M^2} - \delta.}$
\end{enumerate}
\end{lemma}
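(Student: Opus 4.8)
The plan is to manufacture $h$ as a truncation-and-rescaling of a function delivered by the gentle-narrowness of $T$. Fix the data $M,\delta,B,\eta,y$, abbreviate $w=|y|^{p-1}\sgn(y)$ on $B$ (so that $|w|\le(1-\eta)^{p-1}\le 1$ there), and fix a small $\varepsilon>0$ in advance. The first step is to produce a function $x\in L_p(B)$ with $\|x\|=\mu(B)^{1/p}$, $\|x-x^M\|\le\varphi(M)\,\mu(B)^{1/p}$, $\|Tx\|<\varepsilon$, and, in addition, $\bigl|\int_B w\,x^M\,d\mu\bigr|<M\delta/(p\eta)$. The first three of these are just the defining properties (i)--(iii) of a gentle-narrow operator; the fourth is obtained by the subdivide-and-balance argument from the proof of Lemma~\ref{pass}, now applied to the functional $x\mapsto\int_B w\,x^M\,d\mu$: subdivide $B=B_1\sqcup\dots\sqcup B_n$ into pieces of measure $\mu(B)/n$, choose $x_i\in L_p(B_i)$ with $\|x_i\|^p=\mu(B)/n$, $\|x_i-x_i^M\|\le\varphi(M)(\mu(B)/n)^{1/p}$ and $\|Tx_i\|<\varepsilon/n$, observe that $\bigl|\int_{B_i}w\,x_i^M\,d\mu\bigr|\le\|x_i^M\|_1\le\|x_i\|_1\le(\mu(B)/n)^{1/p}(\mu(B)/n)^{1-1/p}=\mu(B)/n$, and pick signs $\theta_i=\pm1$, by a running-sum argument, so that $\bigl|\sum_{i=1}^n\theta_i\int_{B_i}w\,x_i^M\,d\mu\bigr|\le\mu(B)/n$. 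Taking $n>p\eta\,\mu(B)/(M\delta)$ and $x=\sum_i\theta_i x_i$, and using that the $x_i$ have disjoint supports (so $x^M=\sum_i\theta_i x_i^M$), all four properties hold for $x$.

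Next set $h=M^{-1}x^M$. Then $\supp h\subseteq B$ and $\|h\|_\infty\le1$, so $h\in B_{L_\infty(B)}$. Since $\|T\|=1$,
$$
\|Th\|=M^{-1}\|Tx^M\|\le M^{-1}\bigl(\|Tx\|+\|x-x^M\|\bigr)\le M^{-1}\bigl(\varepsilon+\varphi(M)\,\mu(B)^{1/p}\bigr),
$$
and the right-hand side is $<2\,\mu(B)^{1/p}\varphi(M)/M$ as soon as $\varepsilon<\varphi(M)\,\mu(B)^{1/p}$; choosing $\varepsilon$ this small from the outset gives conclusion~(1).

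For conclusion~(2) I would argue pointwise on $B$ and then integrate. For $t\in B$ write $y(t)=\sgn(y(t))\,|y(t)|$ and apply Lemma~\ref{le:march3} with $a=|y(t)|>0$ and $b=\pm\sgn(y(t))\,\eta\,h(t)$; since $|h(t)|\le1$ and $\eta\le|y(t)|$ we have $|b|\le\eta\le a$, and since $|y(t)|\le1-\eta$ and $2-p\ge0$ it follows that
$$
|y(t)\pm\eta h(t)|^p-|y(t)|^p\ \ge\ \pm\,p\,\eta\,w(t)\,h(t)+\frac{p(p-1)}{2^{3-p}}\cdot\frac{\eta^2}{(1-\eta)^{2-p}}\,h(t)^2 .
$$
Integrating over $B$ (off which $y\pm\eta h$ agrees with $y$),
$$
\|y\pm\eta h\|^p-\|y\|^p\ \ge\ \pm\,p\,\eta\int_B w\,h\,d\mu+\frac{p(p-1)}{2^{3-p}}\cdot\frac{\eta^2}{(1-\eta)^{2-p}}\int_B h^2\,d\mu .
$$
The linear term is bounded in absolute value by $p\eta M^{-1}\bigl|\int_B w\,x^M\,d\mu\bigr|<\delta$ by the last property of $x$. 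For the quadratic term, $\int_B h^2\,d\mu=M^{-2}\|x^M\|_2^2$, and $\|x^M\|\ge\|x\|-\|x-x^M\|\ge(1-\varphi(M))\,\mu(B)^{1/p}$, while H\"older's inequality on $B$ with exponent $2/p\ge1$ gives $\|x^M\|\le\|x^M\|_2\,\mu(B)^{1/p-1/2}$, hence $\|x^M\|_2\ge(1-\varphi(M))\,\mu(B)^{1/2}$ and $\int_B h^2\,d\mu\ge M^{-2}(1-\varphi(M))^2\mu(B)$. Substituting the two estimates into the last display yields conclusion~(2).

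The step I expect to be the main obstacle is handling the linear term $\pm p\,\eta\int_B w\,h\,d\mu$: it has to be negligible for \emph{both} choices of sign, and nothing in the definition of a gentle-narrow operator controls it directly, which is exactly why one must subdivide $B$ into many pieces and balance the functional $x\mapsto\int_B w\,x^M\,d\mu$ by a choice of signs. The only other point that needs a moment's care is to apply H\"older in the direction that bounds $\|x^M\|_2$ from \emph{below} in terms of $\|x^M\|$, since that is what converts the elementary estimate $\|x^M\|\ge(1-\varphi(M))\mu(B)^{1/p}$ into the factor $(1-\varphi(M))^2$ appearing in~(2).
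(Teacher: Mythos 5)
Your proof is correct, and its skeleton coincides with the paper's: both take $x$ from the gentle-narrow property on (pieces of) $B$, set $h=M^{-1}x^M$, get (1) from $\|Tx\|$ small plus $\|x-x^M\|\le\varphi(M)\mu(B)^{1/p}$, and get the quadratic gain in (2) from the same H\"older estimate $\|x^M\|_2\ge\|x^M\|_p\,\mu(B)^{1/2-1/p}\ge(1-\varphi(M))\mu(B)^{1/2}$. Where you genuinely diverge is in the treatment of the first-order term. The paper first reduces to simple $y$ (that is what the $-\delta$ slack is used for there), splits $B$ into the level sets $B_k$ of $y$, takes mean-zero $x_k$ on each $B_k$ via Lemma~\ref{pass}(iv), and then applies the integrated inequality of Lemma~\ref{doubt} piecewise with constant $a=b_k$, so that the linear term is supposed to vanish identically. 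You instead keep $y$ general, use the pointwise inequality of Lemma~\ref{le:march3} with $a=|y(t)|$, and make the single functional $\int_B |y|^{p-1}\sgn(y)\,x^M\,d\mu$ small by an equal-measure subdivision of $B$ and a running-sum choice of signs $\theta_i$ applied to the truncations $x_i^M$, absorbing it into the $-\delta$. This is a sound and in one respect more careful route: the paper's piecewise use of Lemma~\ref{doubt} needs the perturbation $\pm\eta M^{-1}x_k^M$ to have mean zero, whereas Lemma~\ref{pass} only gives $\int x_k\,d\mu=0$, and truncation does not preserve mean zero; your balancing of the truncated functions is exactly what controls this term for both choices of $\pm$ without that unstated step. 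What the paper's route buys is brevity once Lemmas~\ref{pass} and~\ref{doubt} are in place; what yours buys is that no reduction to simple functions is needed and the linear term is handled explicitly. The only point you share with the paper rather than improve on is the implicit assumption $\varphi(M)>0$ in making (1) a strict inequality.
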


\begin{proof}[Proof of Lemma \ref{random}]
We fix $M$, $\delta$, $B$, $\eta$ and $y$ as in the assumptions of the Lemma. Since we need to prove strict inequalities, we may and do assume without loss of generality that $y$ is a simple function on $B$
\begin{equation} \label{eq:petya1}
y \cdot \textbf{1}_B = \sum\limits_{k=1}^m b_k \textbf{1}_{B_k}, \,\,\,\,\, B = B_1 \sqcup \cdots \sqcup B_m, \,\,\,\,\, \eta \leq |b_k| \leq 1 - \eta.
\end{equation}

For each $k = 1, \ldots, m$ we choose $x_k \in L_p(B_k)$ so that
\begin{enumerate}
  \item[(i)] $\displaystyle{\|x_k\|^p = \mu(B_k)}$;
  \vspace{0,2 cm}
  \item[(ii)] $\displaystyle{\|x_k - x_k^{M}\| \leq \varphi (M) \, \mu(B_k)^{1/p}}$;
  \vspace{0,2 cm}
  \item[(iii)] $\displaystyle{\|Tx_k\| \leq \frac{\varphi(M) \, \mu(B)^{1/p}}{m}}$;
  \item[(iv)] $\displaystyle{\int_{[0,1]} x_k \, d \mu = 0}$.
\end{enumerate}

We set $x = \sum\limits_{k=1}^m x_k$ and $h = M^{-1} x^M$, and show that $h$ has the desired properties.

(1). Observe that (iii) implies that
\begin{equation} \label{eq:bh1}
\|Tx\| \leq \sum\limits_{k=1}^m \|Tx_k\| < m \frac{\varphi(M) \, \mu(B)^{1/p}}{m} = \varphi(M) \, \mu(B)^{1/p},
\end{equation}
and (ii) yields
\begin{equation} \label{eq:bh2}
\|x - x^M\|^p = \sum\limits_{k=1}^m \|x_k - x_k^M\|^p \leq \sum\limits_{k=1}^m \bigl( \varphi (M) \bigr)^p \mu(B_k) = \bigl( \varphi (M) \bigr)^p \mu(B).
\end{equation}

Combining \eqref{eq:bh1} and \eqref{eq:bh2}, we get
$$
\|Th\| = \frac{\|T (x^M)\|}{M} \leq \frac{\|Tx\|}{M} + \frac{\|x - x^M\|}{M} < 2 \, \mu(B)^{1/p} \frac{\varphi(M)}{M} \, .
$$

(2). Using the well known inequality for norms in $L_p$ and $L_2$ (see \cite[p.~73]{Car}), (i) and \eqref{eq:bh2} we obtain
\begin{align*}
\|x^M\|_2 &\geq \|x^M\|_p \, \mu(B)^{1/2 - 1/p} \geq \bigl( \|x\| - \|x - x^M\| \bigr) \mu(B)^{1/2 - 1/p}\\
&\geq \mu(B)^{1/p} \bigl( 1 - \varphi(M) \bigr) \, \mu(B)^{1/2 - 1/p} = \bigl( 1 - \varphi(M) \bigr) \, \mu(B)^{1/2},
\end{align*}
and hence,
\begin{equation} \label{eq:bh3}
\|x^M\|_2^2 \geq \bigl( 1 - \varphi(M) \bigr)^2 \, \mu(B).
\end{equation}

Thus,
\begin{align*}
&\|y \pm \eta h\|^p = \bigl\| y \cdot \textbf{1}_{[0,1] \setminus B} \bigr\|^p + \sum\limits_{k=1}^m \bigl\| b_k \cdot \textbf{1}_{B_k} \pm \eta M^{-1} x_k^M \bigr\|\\
&\stackrel{\mbox{\tiny by Lemma \ref{doubt}}}{\geq} \bigl\| y \cdot \textbf{1}_{[0,1] \setminus B} \bigr\|^p + \sum\limits_{k=1}^m |b_k|^p \mu(B_k) + \frac{p \, (p-1)}{2^{3-p}} \cdot \frac{\eta^2}{M^2} \sum\limits_{k=1}^m \frac{\|x_k^M\|_2^2}{|b_k|^{2-p}}
\end{align*}

(using \eqref{eq:bh3}, $|b_k| \leq 1 - \eta$ and the equality $\sum\limits_{k=1}^m \|x_k^M\|_2^2 = \|x^M\|_2^2$)
$$
\geq \|y\|^p + \frac{p \, (p-1)}{2^{3-p}} \cdot \frac{\eta^2}{(1 - \eta)^{2-p}} \cdot \mu(B) \cdot \frac{\bigl( 1 - \varphi(M) \bigr)^2}{M^2} \, .
$$

Note that the reason for including $\delta$ in (2) is to make possible the reduction to simple functions in the proof.
\end{proof}

\begin{proof}[Proof of Theorem C]
Let $T \in \mathcal L (L_p)$ be a gentle-narrow operator with a $p$-gentle function $\varphi: [0, +\infty) \to [0,1]$. By Theorem~\ref{A}, to prove that $T$ is narrow,   it is enough to prove that it is somewhat narrow. Without loss of generality, we may and do assume that $\|T\| = 1$. Fix any $A \in \Sigma^+$ and $\varepsilon > 0$, and prove that there exists a sign $x \in L_p(A)$ such that $\|Tx\| < \varepsilon \|x\|$. Consider the set
$$
K_\varepsilon = \Bigl\{ y \in B_{L_\infty(A)}: \,\,\, \|Ty\| \leq \varepsilon \|y\| \Bigr\}.
$$

By arbitrariness of $\varepsilon$, it is enough to prove the following statement:
\begin{equation}\label{A.star}
\bigl(\forall \varepsilon_1 > 0 \bigr) \bigl(\exists \,\,\, \mbox{a sign} \,\,\, x \in L_p(A) \bigr) \bigl( \exists y \in K_\varepsilon \bigr): \,\,\,\,\, \|x - y \| < \varepsilon_1 \|y\|.
\end{equation}

Indeed, if \eqref{A.star} is true for each $\varepsilon$ and $\varepsilon_1$, we choose a sign $x \in L_p(A)$ and $y \in K_{\varepsilon/2}$ such that $\|x - y\| < \varepsilon_1 \|y\|$ where
\begin{equation} \label{eq:eps1}
\varepsilon_1 = \frac{\varepsilon}{2 \varepsilon + 2}.
\end{equation}

Since $\varepsilon_1 < 1$, the inequality $\|y\| \leq \|x\| + \|x - y\| < \|x\| + \varepsilon_1 \|y\|$ implies
$$
\|y\| < \frac{1}{1 - \varepsilon_1} \|x\|,
$$
and hence,
\begin{equation*}
\begin{split}
\|Tx\| &\leq \|Ty\| + \|x - y\| < \frac{\varepsilon}{2} \, \|y\| + \varepsilon_1 \|y\|\\
&= \Bigl( \frac{\varepsilon}{2} + \varepsilon_1 \Bigr) \|y\| < \frac{1}{1 - \varepsilon_1} \Bigl( \frac{\varepsilon}{2} + \varepsilon_1 \Bigr) \|y\| {\buildrel {{\small \rm{by \, \eqref{eq:eps1}}}}\over =} \varepsilon \|x\|.
\end{split}
\end{equation*}

To prove \eqref{A.star}, suppose for contradiction  that \eqref{A.star}  is false. Thus there exists $\varepsilon_1 > 0$ so that
\begin{equation} \label{eq:contrary}
\bigl(\forall \,\,\, \mbox{sign} \,\,\, x \in L_p(A) \bigr) \bigl( \forall y \in K_\varepsilon \bigr): \,\,\,\,\, \|x - y \| \geq \varepsilon_1 \|y\|.
\end{equation}

Denote $\lambda = \sup  \{ \|y\|:   y \in K_\varepsilon  \}$. Notice that since $T$ is gentle-narrow, $\lambda > 0$. Set
\begin{equation} \label{eq:eta}
\eta = \frac{\varepsilon_1 \lambda}{4^{1/p} \mu(A)^{1/p}}
\end{equation}
and observe that
\begin{equation} \label{eq:etain}
\eta < \frac{\varepsilon_1 \mu(A)^{1/p}}{4^{1/p} \mu(A)^{1/p}} < \frac{1}{2} \, .
\end{equation}

Using that $\varphi$ is $p$-gentle, we choose $M > 0$ and $\delta_1 > 0$ so that
\begin{equation} \label{eq:emmmm}
\bigl( 1 - \varphi (M) \bigr)^2 \geq 1/2
\end{equation}
and
\begin{equation} \label{eq:emm1}
M^{2-p} \bigl( \varphi (M) \bigr)^p \leq \varepsilon^p \, \frac{p \, (p-1)}{16} \, \left( \frac{\eta}{1 - \eta} \right)^{2-p} - \delta_1 \, \frac{M^2 \varepsilon^p (1 - 2^p \eta^p)}{\eta^p 2^{2p-2} \varepsilon_1^p \lambda^p} \, .
\end{equation}

Pick $\varepsilon_2 > 0$ and then $\delta_2 > 0$ so that
\begin{equation} \label{eq:epstwo1}
(\lambda - \varepsilon_2)^p + \frac{p \, (p-1)}{2^{6-2p} M^2} \cdot \left( \frac{\eta}{1 - \eta} \right)^{2-p} \cdot \frac{\varepsilon_1^p \lambda^p}{1 - 2^p \eta^p} - \delta_2 > \lambda^p
\end{equation}
(by \eqref{eq:etain} the second summand in the left-hand side of the inequality is positive). Setting $\delta = \min \{ \delta_1, \delta_2\}$, by \eqref{eq:emm1} and \eqref{eq:epstwo1} we obtain
\begin{equation} \label{eq:emm}
M^{2-p} \bigl( \varphi (M) \bigr)^p \leq \varepsilon^p \, \frac{p \, (p-1)}{16} \, \left( \frac{\eta}{1 - \eta} \right)^{2-p} - \delta \, \frac{M^2 \varepsilon^p (1 - 2^p \eta^p)}{\eta^p 2^{2p-2} \varepsilon_1^p \lambda^p} \, .
\end{equation}
and
\begin{equation} \label{eq:epstwo}
(\lambda - \varepsilon_2)^p + \frac{p \, (p-1)}{2^{6-2p} M^2} \cdot \left( \frac{\eta}{1 - \eta} \right)^{2-p} \cdot \frac{\varepsilon_1^p \lambda^p}{1 - 2^p \eta^p} - \delta > \lambda^p.
\end{equation}

We choose $y \in K_\varepsilon$ with
\begin{equation} \label{eq:vaj}
\|y\| > \max \Bigl\{ \frac{\lambda}{2^{1/p}}, \, \lambda - \varepsilon_2 \Bigr\} .
\end{equation}

Define a sign $x$ in $L_p(A)$ by
$$
x(t) = \left\{
         \begin{array}{ll}
           0, & \hbox{if} \,\, |y(t)| \leq 1/2,\\
           {\rm sign} \, (y), & \hbox{if} \,\, |y(t)| > 1/2
         \end{array}
       \right.
$$
and put $$B = \bigl\{ t \in A: \,\, \eta \leq |y(t)| \leq 1 - \eta \bigr\}.$$

Since   $y \in K_\varepsilon$, it follows from \eqref{eq:contrary} that:
\begin{align*}
\varepsilon_1^p \|y\|^p &\leq \|x - y\|^p = \int_B |x - y|^p d \mu + \int_{A \setminus B} |x - y|^p d \mu\\
&\leq \frac{\mu(B)}{2^p} + \bigl( \mu(A) - \mu(B) \bigr) \eta^p.
\end{align*}

Hence, using \eqref{eq:vaj} and \eqref{eq:eta}, we deduce that
$$
\mu(B) \Bigl( \frac{1}{2^p} - \eta^p \Bigr) \geq \varepsilon_1^p \|y\|^p - \mu(A) \eta^p \geq \varepsilon_1^p \, \frac{\lambda^p}{2} - \frac{\varepsilon_1^p \lambda^p}{4} \, = \varepsilon_1^p \, \frac{\lambda^p}{4},
$$
that is,
\begin{equation} \label{eq:muofB}
\mu(B) \geq \frac{2^{p-2} \varepsilon_1^p \lambda^p}{1 - 2^p \eta^p} \, .
\end{equation}

In particular, by \eqref{eq:etain} we have that $\mu(B) > 0$. Observe that \eqref{eq:emm} and \eqref{eq:muofB} imply
\begin{align} \label{eq:new}
M^{-p} \bigl( \varphi(M) \bigr)^p &\leq \varepsilon^p \, \frac{p \, (p-1)}{16 M^2} \, \left( \frac{\eta}{1 - \eta} \right)^{2-p} - \frac{\varepsilon^p \delta (1 - 2^p \eta^p)}{\eta^p 2^{2p - 2} \varepsilon_1^p \lambda^p} \notag\\
&\leq \varepsilon^p \, \frac{p \, (p-1)}{16 M^2} \, \left( \frac{\eta}{1 - \eta} \right)^{2-p} - \frac{\varepsilon^p \delta}{\eta^p 2^p \mu(B)} \, .
\end{align}

By Lemma \ref{random}, we pick $h \in B_{L_\infty(B)}$ so that (1) and (2) hold.
Now comes the first time that we use the fact that the range space is $L_p$, $1<p\le2$. Recall that for all $u,v\in L_p$,
\begin{equation}\label{type}
\rm{Ave}_\pm\|u\pm v\|^p\le \|u\|^p+\|v\|^p.
\end{equation}
Indeed, by the convexity of the function $t^{2/p}$,
$$
\int\rm{Ave}_\pm|u\pm v|^p\le \int(\rm{Ave}_\pm|u\pm v|^2)^{p/2}=
\int(u^2+v^2)^{p/2}\le \int|u|^p+|v|^p.
$$
So we can choose a sign number $\theta \in \{-1,1\}$ so that
$$
\|Ty + \theta \eta T h \|^p \leq \|Ty\|^p + \eta^p \|Th\|^p
$$
and set $z = y + \theta \eta h$. Then by (1) and  the choice of $y \in K_\varepsilon$,
\begin{equation} \label{eq:firstt}
\begin{split}
\|Tz\|^p &\leq \|Ty\|^p + \eta^p \|Th\|^p \leq \varepsilon^p \|y\|^p + \eta^p 2^p \mu(B) M^{-p} \bigl( \varphi(M) \bigr)^p\\
 &{\buildrel {{\small \eqref{eq:new}}}\over\leq} \varepsilon^p \|y\|^p + \eta^p 2^p \mu(B) \varepsilon^p \, \frac{p \, (p-1)}{16 M^2} \, \left( \frac{\eta}{1 - \eta} \right)^{2-p} - \varepsilon^p \delta.
\end{split}
\end{equation}

On the one hand, by condition (2) and \eqref{eq:emmmm}, we have
\begin{equation} \label{eq:onehand}
\|z\|^p \geq \|y\|^p + \frac{p \, (p-1)}{2^{4-p} M^2} \cdot \frac{\eta^2}{(1 - \eta)^{2-p}} \cdot \mu(B) - \delta.
\end{equation}

Then  \eqref{eq:firstt} together with \eqref{eq:onehand} give
$$
\frac{\|Tz\|^p}{\|z\|^p} \leq \varepsilon^p,
$$
and that yields $z \in K_\varepsilon$ (note that $z \in B_{L_\infty(A)}$ by definitions of $z$ and $B$). On the other hand, we can continue the estimate \eqref{eq:onehand} taking into account the choice of $y$, \eqref{eq:vaj} and \eqref{eq:epstwo}, as follows
\begin{align*}
\|z\|^p &> (\lambda - \varepsilon_2)^p + \frac{p \, (p-1)}{2^{4-p} M^2} \cdot \frac{\eta^2}{(1 - \eta)^{2-p}} \cdot \mu(B) - \delta\\
&\stackrel{\small{\rm by   \eqref{eq:muofB}}}{\geq}  (\lambda - \varepsilon_2)^p + \frac{p \, (p-1)}{2^{4-p} M^2} \cdot \frac{\eta^2}{(1 - \eta)^{2-p}} \cdot \frac{2^{p-2} \varepsilon_1^p \lambda^p}{1 - 2^p \eta^p} - \delta \,\, \stackrel{\small{\rm by \eqref{eq:epstwo}}}{>} \, \lambda^p.
\end{align*}

This contradicts the choice of $\lambda$.
\end{proof}

We remark that Example~\ref{R:Sch} demonstrates that there is no analogue of Theorem~C which is true for $p > 2$.

\begin{ack}
We thank Bill Johnson for valuable comments.
\end{ack}




\begin{thebibliography}{12}
\bibitem{Bill}
\newblock {\sc P. Billingsley}.
\newblock {\em Probability and measure}.
\newblock Wiley Series in Probability and Mathematical Statistics:
              Probability and Mathematical Statistics,
Second Edition,
\newblock John Wiley \& Sons Inc., New York,
\newblock (1986).

\bibitem{Bou81-2}
\newblock {\sc J.~Bourgain}.
\newblock {\em New classes of ${\mathcal L}_p$-spaces}.
\newblock Lect. Notes Math.
\newblock {\bf 889} (1981), 1--143.

\bibitem{Car}
\newblock {\sc N.~L.~Carothers}.
\newblock {\em A Short Course of Banach Space Theory}.
\newblock Cambridge Univ. Press
\newblock (2004).

\bibitem{DJS}
\newblock {\sc D.~Dosev, W.~B.~Johnson, G.~Schechtman}.
\newblock {\em Commutators on $L_p$, $1\le p<\infty$}.
\newblock J. Amer. Math. Soc.
\newblock {\bf 26,   No. 1} (2013), 101--127.

\bibitem{ES}
\newblock {\sc P.~Enflo, T.~Starbird}.
\newblock {\em Subspaces of $L^1$ containing $L^1$}.
\newblock Stud. Math.
\newblock {\bf 65, No 2} (1979), 203--225.

\bibitem{JMST}
\newblock {\sc W.~B.~Johnson, B.~Maurey, G.~Schechtman, L.~Tzafriri}.
\newblock {\em Symmetric structures in Banach spaces}.
\newblock Memoirs of the Amer. Math. Soc.
\newblock {\bf 19, No 217} (1979).

\bibitem{KKW}
\newblock {\sc V.~M.~Kadets, N.~Kalton, D.~Werner}.
 \emph{Unconditionally convergent series of operators and narrow operators on $L_1$}.
 Bull. London Math. Soc.
 \newblock {\bf 37, No 2}, (2005), pp.~265--274.

\bibitem{KP92}
\newblock {\sc V.~M.~Kadets, M.~M.~Popov}.
\newblock {\em On the Liapunov convexity theorem with applications to sign embeddings}.
\newblock Ukr. Mat. Zh.,
\newblock {\bf 44, No 9} (1992), 1192--1200.

\bibitem{KS92}
\newblock {\sc V.~M.~Kadets, G. Shekhtman (= G.~Schechtman)}.
\newblock {\em Lyapunov's theorem for {$l_p$}-valued measures}.
\newblock Algebra i Analiz,
\newblock {\bf 4, No 5}
(1992), 148--154 (in Russian) (translation in
St. Petersburg Math. J. {\bf 4, No 5} (1993), 961--966).

\bibitem{LTI}
\newblock {\sc J.~Lindenstrauss, L.~Tzafriri}.
\newblock {\em Classical Banach spaces, Vol. 1, Sequence spaces}.
\newblock Springer--Verlag, Berlin--Heidelberg--New York
\newblock (1977).

\bibitem{MaPi}
\newblock {\sc B.~Maurey, G.~Pisier}.
\newblock {\em Series of independent random variables and geometric properties of Banach spaces}.
\newblock Studia Math.
\newblock {\bf 58, No 1} (1976), 45--90.

\bibitem{MP2}
{\sc V.~V.~Mykhaylyuk, M.~M.~Popov}.
\newblock {\em ``Weak" embeddings of $L_1$}.
\newblock Houston J. Math.
\newblock {\bf 32, No 4} (2006), -- 1139--1152.

\bibitem{PP}
\newblock {\sc A.~M.~Plichko, M.~M.~Popov}.
\newblock {\em Symmetric function spaces on atomless probability spaces}.
\newblock Dissertationes Math. (Rozprawy Mat.)
\newblock {\bf 306} (1990), 1--85.

\bibitem{P}
\newblock {\sc M.~M.~Popov}.
\newblock {\em Narrow operators (a survey)}.
\newblock Banach Center Publ.
\newblock {\bf 92} (2011), 299--326.

\bibitem{PR}
\newblock {\sc M.~M.~Popov, B. Randrianantoanina}.
\newblock {\em Narrow Operators on Function Spaces and Vector Lattices}.
\newblock De Gruyter Studies in Mathematics 45,
\newblock De Gruyter, Berlin, 2012. xiii+319 pp.


\bibitem{Ros81/82}
\newblock {\sc H.~P.~Rosenthal}.
\newblock {\em Some remarks concerning sign embeddings}.
\newblock Semin. D'analyse Fonct. Univ. of Paris VII.
\newblock (1981/82).

\bibitem{Ros83}
\newblock {\sc H.~P.~Rosenthal}.
\newblock {\em Sign-embeddings of $L^1$}.
\newblock Lect. Notes Math., Springer, Berlin
\newblock {\bf 995}  (1983), 155--165.

\bibitem{Ros84}
\newblock {\sc H.~P.~Rosenthal}.
\newblock {\em Embeddings of $L^1$ in $L^1$}.
\newblock Contemp. Math.
\newblock {\bf 26} (1984), 335--349.





\end{thebibliography}
\end{document}